\documentclass[12pt]{amsart}

\usepackage{fullpage}
\usepackage{mathtools}
\usepackage{amssymb,amsmath,amsthm,amscd,mathrsfs,graphicx}
\usepackage[dvipsnames]{xcolor}
\usepackage{bm}
\usepackage{hyperref}
\usepackage{dsfont}
\usepackage{enumerate}
\usepackage{epsfig}
\usepackage{float, graphicx}
\usepackage{latexsym, amsxtra}
\usepackage{pdfpages}
\usepackage{multicol}
\usepackage[normalem]{ulem}
\usepackage{psfrag}
\usepackage{stmaryrd}
\usepackage{tikz}
\usepackage[T1]{fontenc}
\usepackage{url}
\usepackage{verbatim}
\usepackage{xy}
\usepackage{indentfirst}
\usepackage{tikz-cd}
\usepackage{url}
\usepackage{cases}
\usepackage{amsmath}

\flushbottom

\makeatletter
\def\thm@space@setup{%
  \thm@preskip=2ex \thm@postskip=2ex
}
\makeatother

\oddsidemargin=0in
\evensidemargin=0in
\textwidth=6.5in
\setlength{\unitlength}{1cm}
\setlength{\parindent}{0.6cm}

\hypersetup{hidelinks}

\numberwithin{equation}{section}
\theoremstyle{plain}

\newtheorem{thm}{Theorem~}[section]
\newtheorem{lem}[thm]{Lemma~}

\newtheorem{prop}[thm]{Proposition~}

\theoremstyle{remark}

\theoremstyle{definition}
\newtheorem{defn}[thm]{Definition~}

\newcommand{\calP}{\mathcal{P}}

\newcommand{\CC}{\mathbb{C}}
\newcommand{\ZZ}{\mathbb{Z}}

\newcommand{\PP}{\mathbb{P}}

\newcommand{\calO}{\mathcal{O}}

\newcommand\PGL{\mathrm{PGL}}

\newcommand\diag{\mathrm{diag}}
\newcommand\GL{\mathrm{GL}}

\newcommand\Lin{\mathrm{Lin}}

\title{Abelian Automorphism Groups Of Quartic Surfaces and Cubic Fourfolds}

 \author[T. Peng]{Tianzhen Peng}
\address{Hangzhou Foreign Languages School, Hangzhou, China}
\email{brianpengtz@outlook.com}

 \author[Z. Zheng]{Zhiwei Zheng}
\address{Yanqi Lake Beijing Institute of Mathematical Sciences and Applications, Beijing, China}
\email{zhengzw11@163.com}

\date{}
\begin{document}
\bibliographystyle{amsalpha}

\begin{abstract}
In this paper, we develop a new method to classify abelian automorphism groups of hypersurfaces. We use this method to classify (Theorem \ref{theorem: main}) abelian groups that admit a liftable action on a smooth cubic fourfold. A parallel result (Theorem \ref{theorem: main2}) is obtained for quartic surfaces. 
\end{abstract}

\maketitle
\section{Introduction}
Cubic fourfolds are very important objects in algebraic geometry due to its close relation with rationality problems and hyper-K\"ahler geometry. In this paper, we mainly study abelian automorphism groups of smooth cubic fourfolds. Previously, Gonz\'alez-Aguilera and Liendo \cite{gonzalez2011automorphisms} classified automorphisms of smooth cubic fourfolds with prime order (which must be $2,3,5,7$ or $11$). Fu \cite{fu2016classification} classified symplectic automorphisms of smooth cubic fourfolds with primary (namely, power of a prime) orders. Here, an automorphism of a smooth cubic fourfold is called symplectic if the induced action on the Fano scheme of lines (which is a hyper-K\"ahler fourfold) is symplectic. Recently, Laza and the second author \cite{laza2019automorphisms} classified all symplectic automophism groups of smooth cubic fourfolds via Hodge theory and lattice theory. In this paper, we mainly consider abelian automorphism groups of cubic fourfolds. A classification of abelian automorphism groups of smooth cubic fourfolds is claimed in a long unpublished note \cite{2013abelian} (which has more than 200 pages). In this paper, we develop a new method to study abelian automorphism groups of hypersurfaces. Applying our method, we recover the classification of all liftable (see Definition \ref{definition: lift}) actions of abelian groups on smooth cubic fourfolds, see Theorem \ref{theorem: main}. Our proof is rather short and totally self-contained, which shows the efficiency of our method. Besides the case of cubic fourfolds, we also give the parallel result (Theorem \ref{theorem: main2}) for quartic surfaces.

\smallskip

In \cite{zheng2020abelian}, the second author studied the abelian automorphism groups of smooth hypersurfaces. The method we use in this paper to prove Theorem \ref{theorem: main} and \ref{theorem: main2} is an extension and refinement of the method in \cite{zheng2020abelian}. Our approach is based on the fact that a linear action of an abelian group on  $\CC^N$ can be diagonalized. Namely, for an abelian group $G$ and an injective homomoprhism $G\hookrightarrow \GL(N, \CC)$, we can always find an element $g\in \GL(N, \CC)$, such that $g G g^{-1}$ is contained in $(\CC^{\times})^N$. The application of this fact in the study of automorphisms of hypersurfaces originated from Gonz\'alez-Aguilera and Liendo \cite{gonzalez2013order}. They classified orders of automorphisms which are primary and coprime to $d(d-1)$.
\smallskip

One novelty of this paper is to associate a directed graph with a liftable action (of an abelian group $G$ on a smooth $(N-2)$-fold $V(F)$ of degree $d$). We sketch the construction of this graph. Suppose this action is diagonalized by a choice of coordinate system $x_1, \cdots, x_N$. Then $F$ can be written as a homogeneous polynomial of degree $d$ in $x_1, \cdots, x_N$. Take characters $\lambda_1, \cdots, \lambda_N, \lambda\colon G\to \CC^{\times}$ such that
\begin{equation*}
g=\diag(\lambda_1(g), \cdots, \lambda_N(g))\in (\CC^{\times})^{N}
\end{equation*}
and $F(\lambda_1(g) x_1, \cdots, \lambda_N(g)x_N)=\lambda(g)F(x_1,\cdots, x_N)$ for any $g\in G$. We write
$$I=(G, \lambda, \lambda_1, \cdots, \lambda_N).$$
We consider a set $S_I$ of monomials $m=\prod_{i=1}^N x_i^{\alpha_i}$  such that $\sum_{i=1}^N {\alpha_i}=d$ and $\lambda=\prod_{i=1}^N \lambda_i^{\alpha_i}$. Let $S(F)$ be the set of monomials with nonzero coefficients in $F$. Then $S(F)\subseteq S_I$. From the smoothness of $F$, we know that for any $x_i$, there exists $x_j$ such that $x_i^{d-1} x_j\in S(F)$ (see Lemma \ref{lemma: main criterion}). For a set $S$ of monomials in $x_1, \cdots, x_N$ of degree $d$, we define a directed graph with $[N]=\{1,\cdots,N\}$ the set of vertices, and $E_{S}=\{(i,j)|x_i^{d-1}x_j\in S\}$ the set of arrows (from $i$ to $j$). Therefore, if a homogeneous polynomial $F$ is smooth, every vertex of the graph $E_{S(F)}$ has at least one outlet. With a more detailed analysis of the structure of $E_{S_I}$, we will eventually prove Theorem \ref{theorem: main} and \ref{theorem: main2}.
\smallskip

\noindent \textbf{Stucture of the paper}: In \S \ref{section: pre} we introduce some terminologies and preliminary results that will be used throughout the paper. In \S \ref{section: cubic} we specialize to the study of automorphisms of cubic hypersurfaces. In \S \ref{section: cubic fourfold} we concentrate on the study of cubic fourfolds and prove Theorem \ref{theorem: main}. Finally in \S\ref{section: quartic} we consider the case of quartic surfaces.

\smallskip

\noindent \textbf{Acknowledgement}: This work is the outcome of the first author's PCP project in 2020. We thank the staff of PCP for their support. The second author thanks Max Planck Institute for Mathematics for its support and excellent academic atmosphere during his stay.

\section{Preliminary Results}
\label{section: pre}
Given integers $d\geqslant 3, N\geqslant 3$. In this section, we fix some terminologies about $(N-2)$-folds of degree $d$ and their automorphism groups. We denote by $V_N$ a complex vector space of dimension $N$, and denote by $\PP^{N-1}$ the projectivization of $V_N$. Let $\GL(N)$ be the group of linear automorphisms of $V_N$, and let $\PGL(N)$ be the group of linear automorphisms of $\PP^{N-1}$. There is a natural group homomorphism $\GL(N)\to\PGL(N)$.

For a global section $F\in H^0(\PP^{N-1},\calO(d))$, we denote by $V(F)\subseteq \PP^{N-1}$ the hypersurface of degree $d$ defined by the zero locus of $F$. Let $\Lin(V(F))$ be the group of linear automorphisms of $\PP^{N-1}$ preserving $V(F)$. By Matsumura-Monsky \cite{matsumura1963automorphisms}, the group $\Lin(V(F))$ is finite when $V(F)$ is smooth. Moreover, if $V(F)$ is smooth, $N\ge 4$ and $(d,N)\ne(4,4)$, then $\Lin(V(F))$ is equal to the group of regular automorphisms of $V(F)$ as a complex variety.

\begin{defn}
\label{definition: lift}
Let $V(F)\subseteq \PP^{N-1}$ be an $(N-2)$-fold of degree $d$. Take a subgroup $G\leqslant \Lin(V(F))\leqslant \PGL(N)$. The action of $G$ on $V(F)$ is called liftable, if the group embedding $G\hookrightarrow \PGL(N)$ factors through $\GL(N)\to \PGL(N)$ (Namely, there exists a group homomorphism $G\to \GL(N)$, such that the composition of $G\to \GL(N)\to \PGL(N)$ equals to the given one). Here, the group embedding $G\hookrightarrow \GL(N)$ is called a lifting of the action.
\end{defn}

Given a coordinate system $(x_1,\cdots, x_N)$, we call a linear automorphism of $V_N$ diagonal with respect to this coordinate system, if it sends $(x_1, \cdots, x_N)\in V_N$ to $(\lambda_1 x_1, \cdots, \lambda_N x_N)$ for certain $\lambda_i\in \CC^{\times}$. We denote such an automorphism by $\diag(\lambda_1, \cdots, \lambda_N)$, and denote by $\diag[\lambda_1:\cdots:\lambda_N]$ the equivalence class in $\PGL(N)$.

\smallskip

\subsection{Poset $\calP$}
\label{subsection: poset}
Now we take a finite abelian group $G$. Suppose we have a liftable action of $G$ on a smooth $(N-2)$-fold $V(F)$ of degree $d$. We choose a lifting $\varphi_1\colon G\hookrightarrow \GL(N)$. Since $G$ is abelian, by standard linear algebra, we can choose a coordinate system $(x_1, \cdots, x_N)$ of $V_N$, such that all elements in $G$ is diagonal with respect to $(x_1, \cdots, x_N)$. Therefore, we have $N$ characters $\lambda_1, \cdots, \lambda_N\colon G\to \CC^{\times}$, such that $g(x_1, \cdots, x_N)=(\lambda_1(g) x_1,\cdots, \lambda_N(g)x_N)$ for any $g\in G$.

\smallskip

Since the action of G on $\PP^{N-1}$ preserves the hypersurface $V(F)$, there exists a character $\lambda\colon G\to \CC^{\times}$, such that $F(\lambda_1(g)x_1, \cdots, \lambda_N(g)x_N)=\lambda(g) F(x_1,\cdots, x_N)$ for any $g\in G$ and $x_1, \cdots, x_N\in \CC$. We denote by $I$ the data $(G, \lambda, \lambda_1, \cdots, \lambda_N)$ and define

\begin{equation*}
S_I\coloneqq\{m=\prod_{i=1}^N x_i^{\alpha_i}\big{|}\sum_{i=1}^N {\alpha_i}=d, \lambda=\prod_{i=1}^N \lambda_i^{\alpha_i}\}.
\end{equation*}

Suppose we have two liftings $\varphi, \varphi'\colon G\hookrightarrow \GL(N, \CC)$, giving rise to $I=(G, \lambda, \lambda_1, \cdots, \lambda_N)$ and $I'=(G, \lambda', \lambda_1', \cdots, \lambda_N')$. Then there exists a character $\xi\colon G\to \CC^{\times}$, such that $\lambda_i'=\lambda_i \xi$, and $\lambda'=\lambda \xi^d$. We then say $I$ and $I'$ are equivalent. By straightforward calculation, we have $S_I=S_{I'}$. Therefore, $S_I$ only depends on the equivalence class $[I]$ of $I$.

Suppose $H$ is a subgroup of $G$. Then the data $I$ induces a data $J=(H, \lambda\big{|}_H, \lambda_1\big{|}_{H}, \cdots, \lambda_N\big{|}_H)$. We then write $J\leqslant I$. If $H\subsetneq G$, then we can write $J<I$. Let $\calP=\calP_{d,N}$ be the set of equivalence classes of $I$. For $J<I$ (or $J\leqslant I$), we write $[J]<[I]$ (or $[J]\leqslant [I]$ respectively). In this way, $\calP$ is a poset. We call $I$ or $[I]$ maximal, if it is maximal with respect to this poset-structure.

In the rest of the paper, the symbol $S$ always refers to a set of monomials of degree $d$ in $x_1, \cdots, x_N$. As we have mentioned in the introduction, we define
\begin{equation*}
E_{S}\coloneqq\{(i,j)|x_i^{d-1}x_j\in S\}
\end{equation*}
to be a directed graph associated with $S$, where $[N]=\{1, \cdots, N\}$ is the set of vertices, and $(i,j)\in E_S$ is an arrow in the graph from $i$ to $j$.

\subsection{K,T,Y-type}
A homogeneous polynomial of degree $d$ is called of type $K$, if it is (up to a permutation of coordinates) of the form
 \begin{equation}
 \label{equation: K}
 x_1^{d-1} x_2+x_2^{d-1}x_3+\cdots+x_{k}^{d-1} x_1,
 \end{equation}
and is called of type $T$, if it is of the form
 \begin{equation}
 \label{equation: T}
 x_1^{d-1} x_2+\cdots+x_{k-1}^{d-1} x_k+x_k^d
 \end{equation}
For the case $d=3$, a polynomial of the form
\begin{equation}
\label{equation: Y}
x_1^2 x_2+\cdots+x_{a-1}^2 x_a+ y_1^2 y_2+ \cdots+y_{b-1}^2 y_b+  x_a^2 z+ y_b^2 z+ z^2 w+w^3+ x_a y_b w
\end{equation}
is called of type $Y$. Here, the types $K$ and $T$ are first introduced and used in \cite{zheng2020abelian}.

\begin{defn}
Let $N\geqslant 1$, a homogeneous polynomial $F(x_1, \cdots, x_N)$ is called smooth if the equations $\frac{\partial F}{\partial x_1}=\cdots=\frac{\partial F}{\partial x_N}=0$ have only one solution $(x_1, \cdots, x_N)=(0,\cdots, 0)$.
\end{defn}

It is straightforward to check that polynomials of type $K$ and $T$ are smooth. We will show (Proposition \ref{proposition: Y smooth}) that all $Y$-type polynomials are smooth.

\begin{defn}
\label{definition: KTY}
For $d\neq3$, a smooth homogeneous polynomial is called simple if it equals to the sum of finitely many polynomials of type $K$ or $T$ with independent variables. For $d=3$, a smooth homogeneous polynomial is called simple if it equals to the sum of finitely many polynomials of type $K,T$ or $Y$ with independent variables.
\end{defn}

\begin{defn}
\label{definition: simple large}
Let $S$ be a set of monomials of degree $d$. We call $S$ large if a generic linear combination of the elements in $S$ is a smooth polynomial, otherwise $S$ is called small. We call $S$ simple if the sum of certain elements in $S$ is a simple polynomial. We say $I=(G, \lambda, \lambda_1, \cdots, \lambda_N)$ is large, small, or simple if and only if $S_I$ is.
\end{defn}

The following lemma (which is equivalent to \cite[Lemma 3.2]{oguiso2019quintic}) will be useful.

\begin{lem}
\label{lemma: main criterion}
If $S$ is large, then for any distinct integers $a_1, \cdots, a_n, b_1, \cdots, b_k\in \{1,\cdots,N\}$ with $k<n$, there exists $m\in S$ such that $\sum_{j=1}^n \deg_{a_j}m\geqslant d-1$ and $\deg_{b_i}m=0$ for all $i\in\{1,\cdots,k\}$.
\end{lem}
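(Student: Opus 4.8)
The plan is to prove the contrapositive: assuming the stated conclusion fails for some choice of distinct indices $a_1,\dots,a_n,b_1,\dots,b_k$ with $k<n$, I will show that \emph{every} linear combination $F=\sum_{m\in S}c_m m$ is singular, so that $S$ is small. Write $A=\{a_1,\dots,a_n\}$ and $B=\{b_1,\dots,b_k\}$; these are disjoint subsets of $[N]$. The failure of the conclusion means precisely that every $m\in S$ either has $\deg_{b}m>0$ for some $b\in B$, or else satisfies $\sum_{a\in A}\deg_{a}m\le d-2$.

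Fix an arbitrary $F=\sum_{m\in S}c_m m$ and let $L\subseteq V_N$ be the coordinate subspace defined by $x_i=0$ for all $i\notin A$, so that $\PP(L)\cong\PP^{n-1}$. The key observation is that for every index $i\notin B$ the partial derivative $\partial F/\partial x_i$ vanishes identically on $L$. Indeed, a monomial $m$ has $(\partial m/\partial x_i)|_L\not\equiv 0$ only when $x_i\mid m$ and $m/x_i$ is supported on $A$; writing then $m=x_i m'$ with $m'$ of degree $d-1$ supported on $A$, we get $\deg_b m=0$ for every $b\in B$ (since $A$ is disjoint from $B$ and $m$ is supported on $A\cup\{i\}$), while $\sum_{a\in A}\deg_a m$ equals $d$ if $i\in A$ and equals $d-1$ if $i\notin A\cup B$. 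In either case $m$ would satisfy the conclusion, so $m\notin S$ and $m$ does not occur in $F$. Consequently only the $k$ partials $\partial F/\partial x_{b_1},\dots,\partial F/\partial x_{b_k}$ can fail to vanish on $L$, and each of them restricts to $L$ to a homogeneous form of degree $d-1$ in the $n$ variables $\{x_a:a\in A\}$.

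Since $k<n$, i.e. $k\le n-1=\dim\PP(L)$, the $k$ hypersurfaces cut out on $\PP(L)$ by these restricted forms have nonempty common intersection, by the projective dimension theorem (any restricted form that happens to vanish identically simply contributes all of $\PP(L)$, so there are at most $k$ genuine hypersurfaces to intersect). Choosing a point $p$ in this intersection and lifting it to a nonzero vector of $L$, we get $\partial F/\partial x_i(p)=0$ for all $i\in[N]$: for $i\in B$ by the choice of $p$, and for $i\notin B$ because $p\in L$. Hence $F$ is singular. As $F$ was an arbitrary linear combination of the elements of $S$, the set $S$ is small, which proves the contrapositive.

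The only real content is the choice of the subspace $L$ and the observation that the assumed failure of the conclusion forces all but the $|B|$ partials $\partial F/\partial x_{b}$ ($b\in B$) to vanish on $L$; after that the argument is a one-line dimension count, so I do not anticipate a serious obstacle. It is worth checking that the degenerate cases are harmless: if $k=0$ then $L$ itself is the required common zero locus; if $S=\varnothing$ then $F=0$ is singular; and a restricted form that is identically zero only enlarges the intersection.
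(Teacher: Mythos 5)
Your proof is correct. The paper itself gives no argument for this lemma --- it simply cites \cite[Lemma 3.2]{oguiso2019quintic} --- so there is no internal proof to compare against; your contrapositive argument (restrict to the coordinate subspace $L$ spanned by the $x_{a_j}$, observe that the hypothesis forces $\partial F/\partial x_i$ to vanish identically on $L$ for every $i\notin B$, then use $k\le n-1$ and the projective dimension theorem to find a common zero of the remaining $k$ forms on $\PP(L)\cong\PP^{n-1}$) is exactly the standard one behind the cited result, and all the steps check out. The case analysis in your key observation ($i\in A$ versus $i\notin A\cup B$, giving $\sum_{a\in A}\deg_a m$ equal to $d$ or $d-1$) is right, and you in fact prove something slightly stronger than needed, namely that \emph{every} linear combination of $S$ is singular rather than just a generic one. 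Your handling of the degenerate cases ($k=0$, identically vanishing restricted forms) is also sound.
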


We denote by $M(a_1,\cdots,a_n;b_1,\cdots,b_k)$ the set of monomials with $\sum_{i=j}^n \deg_{a_j}m\geqslant d-1$ and $\deg_{b_i}m=0$ for all $i\in\{1,\cdots,k\}$. This notation will be used frequently in the proof of Theorem \ref{theorem: main} and \ref{theorem: main2}.

\smallskip

A direct corollary of Lemma \ref{lemma: main criterion} is that, if $S$ is large, then for any $x_i$ there exists $x_j$ such that $(i, j)\in E_S$. Let $I=(G, \lambda, \lambda_1, \cdots, \lambda_N)$ be induced from a liftable action of $G$ on a smooth hypersurface $V(F)\subseteq\PP^{N-1}$ of degree $d$. Then the corresponding set $S_I$ contains $S(F)$ as a subset, hence is large. We consider chains of indices $(i_1, i_2, \cdots, i_s)$ such that
\begin{enumerate}[(a)]
\item $(i_t, i_{t+1})\in E_{S_I}$ for any $t\in\{1,\cdots, s-1\}$, and \label{rule chain}
\item $(i_s, i_{s-c})\in E_{S_I}$ for certain $c\in\{0,\cdots, s-1\}$, and \label{rule ring}
\item $\lambda_{i_1}, \cdots, \lambda_{i_s}$ are distinct. \label{rule lambda}
\end{enumerate}

Such chains clearly exist. We have the following lemma:
\begin{lem}
\label{lemma: restriction of edges in a spoon}
Given $I=(G, \lambda, \lambda_1, \cdots, \lambda_N)$ and a chain of indices $(i_1, i_2, \cdots, i_s)$ introduced above, then any $(i_a,i_b)\in E_{S_I}$ with $a,b\in[s]$ must be one of those proposed in (\ref{rule chain}) and (\ref{rule ring}).
\end{lem}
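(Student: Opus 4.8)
The plan is to translate the graph-theoretic assertion into an identity of characters and then invoke the distinctness hypothesis (\ref{rule lambda}). First I would unwind the definitions: an arrow $(i,j)\in E_{S_I}$ means $x_i^{d-1}x_j\in S_I$, which by the defining condition of $S_I$ is precisely the character identity $\lambda=\lambda_i^{d-1}\lambda_j$. Thus the chain hypotheses give, for every $t\in\{1,\dots,s-1\}$, the identity $\lambda=\lambda_{i_t}^{d-1}\lambda_{i_{t+1}}$ coming from (\ref{rule chain}), together with $\lambda=\lambda_{i_s}^{d-1}\lambda_{i_{s-c}}$ coming from (\ref{rule ring}).

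Next I would take an arbitrary arrow $(i_a,i_b)\in E_{S_I}$ with $a,b\in[s]$, so that $\lambda=\lambda_{i_a}^{d-1}\lambda_{i_b}$, and split into two cases according to whether $a<s$ or $a=s$. If $a<s$, I compare this identity with $\lambda=\lambda_{i_a}^{d-1}\lambda_{i_{a+1}}$ and cancel $\lambda_{i_a}^{d-1}\in\CC^{\times}$ to obtain $\lambda_{i_b}=\lambda_{i_{a+1}}$; since the characters $\lambda_{i_1},\dots,\lambda_{i_s}$ are pairwise distinct by (\ref{rule lambda}) and $b,a+1\in[s]$, this forces $b=a+1$, so the arrow is one of those listed in (\ref{rule chain}). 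If $a=s$, I instead compare with $\lambda=\lambda_{i_s}^{d-1}\lambda_{i_{s-c}}$, cancel $\lambda_{i_s}^{d-1}$, deduce $\lambda_{i_b}=\lambda_{i_{s-c}}$, and again use distinctness to conclude $b=s-c$, which is exactly the arrow in (\ref{rule ring}).

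I do not expect any genuine obstacle here: the entire argument rests on the single observation that, once $\lambda$ is fixed, the ``source'' character $\lambda_{i_a}^{d-1}$ determines the ``target'' character $\lambda_{i_b}$, and that the eligible targets are confined to the indices of the chain, whose characters are pairwise distinct. The only point deserving a line of care is the degenerate case $c=0$, in which (\ref{rule ring}) reads $(i_s,i_s)\in E_{S_I}$, i.e.\ $x_{i_s}^d\in S_I$; the same cancellation then shows that any arrow out of $i_s$ must be the loop $(i_s,i_s)$ itself, so the conclusion still holds verbatim.
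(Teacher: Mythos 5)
Your argument is correct and is essentially the paper's own proof: both reduce an arrow $(i_a,i_b)\in E_{S_I}$ to the character identity $\lambda=\lambda_{i_a}^{d-1}\lambda_{i_b}$, cancel the invertible factor $\lambda_{i_a}^{d-1}$ against the identity coming from the designated outgoing edge of $i_a$ (namely $i_{a+1}$ if $a<s$, or $i_{s-c}$ if $a=s$), and invoke the distinctness of $\lambda_{i_1},\dots,\lambda_{i_s}$ to pin down the target. The paper phrases this as a proof by contradiction via a ``next'' function, but the content is identical, and your explicit remark on the degenerate self-loop case $c=0$ is a harmless extra.
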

\begin{proof}
Assume to the contrary that $(i_a,i_b)\in E_{S_I}$ and it does not equal to any pair in (\ref{rule chain}) and (\ref{rule ring}). Let
\begin{equation*}
    \text{next}(a)=\begin{cases}
    a+1 & \text{if }a<s,\\
    c & \text{if }a=s,
    \end{cases}
\end{equation*}
then $(i_a,i_{\text{next}(a)})$ is the edge proposed in (\ref{rule chain}) or (\ref{rule ring}). Hence, $b\neq \text{next}(a)$. Since $x_{i_a}^{d-1}x_{i_b},$
$x_{i_a}^{d-1}x_{i_{\text{next}(a)}}\in S$, then $\lambda_{i_a}^{d-1}\lambda_{i_b}=\lambda_{i_a}^{d-1}\lambda_{i_{\text{next}(a)}}=\lambda$. This implies $\lambda_{i_b}=\lambda_{i_{\text{next}(a)}}$, which contradicts with (\ref{rule lambda}).
\end{proof}

\begin{defn}
\label{definition: s and c}
Let $s_I$ be the maximal possibility of $s$ among all choices, and let $c_I$ be the minimal possibility of $c$ among all choices such that $s=s_I$.
\end{defn}

Our proof of Theorem \ref{theorem: main} and \ref{theorem: main2} relies on a case-by-case analysis according to the value of $(s_I, c_I)$.

\smallskip

\begin{lem}
\label{lemma: any d}
Given $I=(G, \lambda, \lambda_1, \cdots, \lambda_N)$ which is large. If there exist $a,b,c,f\in[N]$ (which may equal to each other) such that $a\ne b$, $\{(a,c),(b,c),(c,f)\}\subseteq E_{S_I}$, $\lambda_c\ne \lambda_a, \lambda_b$, then there exists $e\in[N]\backslash\{c\}$ such that $(e,f)\in E_{S_I}$.
\end{lem}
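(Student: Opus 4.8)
The plan is to convert the edge relations into character identities and reduce everything to a statement about $(d-1)$-th powers of the weights $\lambda_i$. Observe that $(i,j)\in E_{S_I}$ is equivalent to $\lambda_i^{d-1}\lambda_j=\lambda$; in particular $(c,f)\in E_{S_I}$ gives $\lambda_c^{d-1}\lambda_f=\lambda$, so \emph{any} index $e$ with $\lambda_e^{d-1}=\lambda_c^{d-1}$ automatically satisfies $\lambda_e^{d-1}\lambda_f=\lambda$, i.e.\ $(e,f)\in E_{S_I}$. Hence it suffices to find $e\neq c$ with $\lambda_e^{d-1}=\lambda_c^{d-1}$, and I would argue by contradiction, assuming that $c$ is the \emph{only} index $i$ with $\lambda_i^{d-1}=\lambda_c^{d-1}$.

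First I would record the immediate consequences of the hypotheses: from $(a,c),(b,c)\in E_{S_I}$ we get $\lambda_a^{d-1}\lambda_c=\lambda_b^{d-1}\lambda_c=\lambda$, hence $\lambda_a^{d-1}=\lambda_b^{d-1}$; and since $\lambda_c\neq\lambda_a,\lambda_b$ together with $a\neq b$, the indices $a,b,c$ are pairwise distinct. Then I would apply Lemma~\ref{lemma: main criterion} to the large set $S_I$ with $n=2$, $(a_1,a_2)=(a,b)$ and $k=1$, $b_1=c$: this produces a monomial $m=\prod_i x_i^{\alpha_i}\in S_I$ with $\deg_a m+\deg_b m\geq d-1$ and $\deg_c m=0$. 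Since $\deg m=d$ and $\deg_c m=0$, at most one unit of degree lies outside $\{a,b\}$, so either $m=x_a^{\alpha_a}x_b^{\alpha_b}$ with $\alpha_a+\alpha_b=d$, or $m=x_a^{\alpha_a}x_b^{\alpha_b}x_j$ with $\alpha_a+\alpha_b=d-1$ and $j\notin\{a,b,c\}$.

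In both cases I would divide the relation $\prod_i\lambda_i^{\alpha_i}=\lambda$ by the relation $\lambda_a^{d-1}\lambda_c=\lambda$ coming from $(a,c)\in E_{S_I}$, solve for $\lambda_c$ (first case) or $\lambda_j$ (second case), raise the result to the $(d-1)$-th power, and use $\lambda_a^{d-1}=\lambda_b^{d-1}$ to merge all the $\lambda_a$- and $\lambda_b$-exponents. The surviving power of $\lambda_a^{d-1}$ works out to be $(\alpha_a+\alpha_b)-(d-1)$, which is $1$ in the first case and $0$ in the second, so one obtains $\lambda_c^{d-1}=\lambda_a^{d-1}$ in the first case and $\lambda_j^{d-1}=\lambda_c^{d-1}$ in the second. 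Each of these contradicts the uniqueness assumption, since $a\neq c$ and $j\neq c$; this contradiction completes the proof.

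The only real decision is the choice of instance of Lemma~\ref{lemma: main criterion}, namely with ``present set'' $\{a,b\}$ and ``forbidden set'' $\{c\}$; after that the steps are mechanical, the mechanism being that the $(d-1)$-th power map turns $\lambda_a^{d-1}=\lambda_b^{d-1}$ into a cancellation of the mixed $x_a,x_b$-part of $m$. I do not anticipate a genuine obstacle; the only care needed is to treat the two shapes of $m$ uniformly and to note that degenerate sub-shapes (such as $m=x_a^d$, or the case $f=c$) are either subsumed by the general computation or contradict $\lambda_c\neq\lambda_a$ directly.
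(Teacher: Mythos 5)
Your proof is correct and follows essentially the same route as the paper: both apply Lemma~\ref{lemma: main criterion} with present set $\{a,b\}$ and forbidden index $c$, then raise the resulting character identity to the $(d-1)$-th power so that $\lambda_a^{d-1}=\lambda_b^{d-1}$ cancels the mixed part of $m$ and yields $\lambda_e^{d-1}=\lambda_c^{d-1}$, hence $(e,f)\in E_{S_I}$. The only cosmetic differences are your contradiction framing and your explicit case split on $\deg_a m+\deg_b m\in\{d-1,d\}$, which the paper absorbs into the single expression $m=x_a^k x_b^{d-k-1}x_e$ with $e$ allowed to lie in $\{a,b\}$.
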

\begin{proof}
From the assumption we know $a,b,c$ are distinct. By Lemma \ref{lemma: main criterion}, there exists a monomial $m\in S_I$ such that $\deg_a m+ \deg_b m\geqslant d-1$ and $\deg_c m=0$. We can write $m=x_a^{k}x_b^{d-k-1}x_e$ with $e\in[N]\backslash\{c\}$. We then have
\begin{equation*}
\lambda_a^k\lambda_b^{d-k-1}\lambda_e=\lambda_a^{d-1}\lambda_c=\lambda_b^{d-1}\lambda_c=\lambda,
\end{equation*}
which implies that
$$(\lambda_a^k\lambda_b^{d-k-1}\lambda_e)^{d-1}=(\lambda_a^{d-1}\lambda_c)^k(\lambda_b^{d-1}\lambda_c)^{d-k-1}.$$
After simplification, we have $\lambda_e^{d-1}=\lambda_c^{d-1}$, which implies that $(e,f)\in E_{S_I}$.
\end{proof}

The following proposition will be used in the proof of Theorem \ref{theorem: main} and \ref{theorem: main2} in the case $c_I>0$.
\begin{prop}
\label{proposition: loop}
Given $I=(G, \lambda, \lambda_1, \cdots, \lambda_N)$ which is large. If $s_I-1>c_I\geqslant 1$, then $N\geqslant s_I+c_I$.
\end{prop}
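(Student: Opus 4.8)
The plan is to start from a maximal chain $(i_1,\dots,i_{s_I})$ achieving $s=s_I$ and $c=c_I$, so that by (\ref{rule ring}) we have $(i_{s_I},i_{s_I-c_I})\in E_{S_I}$, producing a directed cycle $C$ on the $c_I+1$ vertices $i_{s_I-c_I},i_{s_I-c_I+1},\dots,i_{s_I}$ together with the ``handle'' $i_1,\dots,i_{s_I-c_I}$ attached to it (a ``spoon'' in the paper's terminology). Lemma \ref{lemma: restriction of edges in a spoon} tells us that among the vertices of the chain, the only arrows in $E_{S_I}$ are the ones listed in (\ref{rule chain}) and (\ref{rule ring}); in particular no vertex of the chain other than $i_{s_I-c_I}$ receives an arrow from outside its prescribed predecessor. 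The chain already uses $s_I$ vertices, so it suffices to exhibit at least $c_I$ further vertices outside $\{i_1,\dots,i_{s_I}\}$.

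The key step is to apply Lemma \ref{lemma: main criterion} repeatedly to force new outlets. For each vertex $i_t$ on the cycle $C$ with $t\neq i_{s_I-c_I}$, its unique outgoing arrow inside the chain points to $i_{t+1}$ (or wraps to $i_{s_I-c_I}$), and $x_{i_t}^{d-1}x_{i_{t+1}}\in S_I$. Now I would feed Lemma \ref{lemma: main criterion} with $a_1=i_t$ and the blocked set $b_1,\dots,b_k$ equal to all chain-indices that $i_t$ is \emph{not} allowed to point to; this yields a monomial $m\in S_I$ with $\deg_{i_t}m\ge d-1$ whose remaining variable $x_e$ has $e\notin\{\text{blocked indices}\}$. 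Combining the resulting character equation $\lambda_{i_t}^{d-1}\lambda_e=\lambda$ with the equation coming from the prescribed edge $\lambda_{i_t}^{d-1}\lambda_{i_{t+1}}=\lambda$ gives $\lambda_e=\lambda_{i_{t+1}}$ (as in Lemma \ref{lemma: any d}); by condition (\ref{rule lambda}) the characters along the chain are distinct, so either $e=i_{t+1}$ (the arrow we already knew, disallowed since $i_{t+1}$ is on the blocked list unless it is exactly the prescribed target — so we must allow $i_{t+1}$ and instead track multiplicity) or $e$ is a genuinely new vertex outside the chain. Iterating this over the $c_I$ cycle vertices that are not the attachment point, and using Lemma \ref{lemma: any d} to prevent the new vertices produced at different $i_t$ from coinciding (each new vertex $e$ would itself need an outlet, and its character is pinned down, which lets one separate them), produces $c_I$ distinct vertices outside $\{i_1,\dots,i_{s_I}\}$. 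Hence $N\ge s_I+c_I$.

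The hypothesis $s_I-1>c_I$, i.e. $c_I\le s_I-2$, will be used to guarantee that the handle is nonempty of length $\ge 2$, so that the attachment vertex $i_{s_I-c_I}$ is genuinely interior and the maximality of $s_I$ can be invoked: if one of the newly-produced vertices $e$ coincided with a chain vertex or with another new vertex in a way that closed up a longer admissible chain, we would contradict the maximality of $s_I$ (or, if it closed a shorter cycle through a chain vertex, the minimality of $c_I$).

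The main obstacle I anticipate is the bookkeeping to show the $c_I$ new vertices are pairwise distinct and distinct from the chain: a priori the outlet $e$ extracted for $i_t$ and the outlet $e'$ extracted for $i_{t'}$ could be the same vertex, or could land back on the handle $i_1,\dots,i_{s_I-c_I-1}$. Ruling this out requires carefully choosing the blocked sets $b_1,\dots,b_k$ in each application of Lemma \ref{lemma: main criterion} (one must keep $k<n$, so one cannot simply block everything), and then playing the character equations against each other — every candidate coincidence forces an equality among the $\lambda$'s which either violates (\ref{rule lambda}) directly or, via the ``every vertex has an outlet'' corollary plus Lemma \ref{lemma: any d}, produces an admissible chain longer than $s_I$ or a return-cycle shorter than $c_I$, contradicting Definition \ref{definition: s and c}. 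Organizing this contradiction-by-maximality argument cleanly, rather than as an ad hoc case split, is the crux of the proof.
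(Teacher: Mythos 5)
Your overall plan (produce $c_I$ new vertices outside the chain by forcing extra in-arrows into the cycle) is the right one, but the engine you propose for producing them does not work. You want to apply Lemma \ref{lemma: main criterion} with the single index $a_1=i_t$ together with a nonempty blocked set; the lemma requires $k<n$, so with $n=1$ you must take $k=0$ and cannot exclude anything. The resulting monomial may then simply be $x_{i_t}^{d-1}x_{i_{t+1}}$, the edge you already have, and the conclusion $\lambda_e=\lambda_{i_{t+1}}$ is perfectly consistent with $e=i_{t+1}$; nothing forces a new vertex. You flag this tension yourself at the end but do not resolve it, and the subsequent ``contradiction-by-maximality'' bookkeeping for distinctness is left entirely schematic.

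The paper resolves exactly this point by using Lemma \ref{lemma: any d} as the production mechanism rather than as an afterthought: one needs \emph{two distinct} vertices $a\ne b$ pointing to the same vertex $c$ (so that $n=2$ and one may block the single vertex $c$), together with an out-arrow $(c,f)$; this forces some $e\ne c$ with $(e,f)\in E_{S_I}$, and Lemma \ref{lemma: restriction of edges in a spoon} then pushes $e$ outside the chain. The initial such pair is $a=s_I-c_I-1$ (from the handle) and $b=s_I$ (from the ring closure), both pointing to $c=s_I-c_I$; this is precisely where the hypothesis $s_I-1>c_I$ enters, to guarantee that the handle vertex $s_I-c_I-1$ exists, whereas your proposal attributes the hypothesis to a vaguer maximality role. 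Each new vertex $s_I+k$ then pairs with the cycle vertex $s_I-c_I+k-1$ to produce the next one, and distinctness of the new vertices is immediate because they point to cycle vertices with pairwise distinct characters by condition (\ref{rule lambda}), not via the maximality-of-$s_I$ or minimality-of-$c_I$ contradictions you sketch. Without this two-source device your argument does not get off the ground.
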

\begin{proof}
Without loss of generality, we let $(i,i+1)\in E_{S_I}$ for all positive integers $i<s_I$ and $(s_I,s_I-c_I)\in E_{S_I}$. We also assume that $\lambda_{1}, \cdots, \lambda_{s_{I}}$ are distinct. Firstly, using Lemma \ref{lemma: any d} with $(a,b,c,f)=(s_I-c_I-1,s_I,s_I-c_I,s_I-c_I+1)$, there exists $e\in[N]\backslash\{s_I-c_I\}$ such that $(e,s_I-c_I+1)\in E_{S_I}$. Then, according to Proposition \ref{lemma: restriction of edges in a spoon}, we must have $e>s_I$. Without loss of generality, we let $e=s_I+1$.

Suppose $c_I=1$, then we are done since $N\geqslant s_I+1$. Suppose $c_I\geqslant 2$. Using Lemma \ref{lemma: any d} with $(a,b,c,f)=(s_I-c_I, s_I+1, s_I-c_I+1, s_I-c_I+2)$, there exists $e'\in [N]\backslash\{s_I-c_I+1\}$ such that $(e', s_I-c_I+2)\in E_{S_I}$. By Proposition \ref{lemma: restriction of edges in a spoon}, we must have $e'>s_I$. We have $e'\ne e$ since $(e, s_I-c_I+1), (e', s_I-c_I+2)\in E_{S_I}$ and $\lambda_{s_I-c_I+1}\ne \lambda_{s_I-c_I+2}$. Therefore, we may assume (without loss of generality) $e'=s_I+2$.

Continuing the above argument, we may ask $(s_I+k, s_I-c_I+k)\in E_{S_I}$ with $k\in [c_I]$. In particular, $N\ge s_I+c_I$.
\end{proof}





\section{Cubic Hypersurfaces}
\label{section: cubic}
In this section we focus on cubic hypersurfaces. We give a useful lemma in \S\ref{subsection: key lemma}, and discuss the properties of polynomials of type $Y$ in \S \ref{Y-type}.

\subsection{A useful lemma}
\label{subsection: key lemma}
The next lemma will be repeatedly used in the proof of Theorem \ref{theorem: main}.

\begin{lem}
\label{lemma: key}
Let $d=3$. Given $I=(G, \lambda, \lambda_1, \cdots, \lambda_N)$ which is large. If there exist $a,b,c,f\in [N]$ (which may equal to each other) such that $a\ne b$, $\{(a,c),(b,c),(c,f)\}\subseteq E(\coloneqq E_{S_I})$ and $\lambda_c\ne\lambda_a, \lambda_b$, then there exists $e\in\{1,\cdots, N\}\backslash\{a,b,c\}$ such that $(e,f)\in E$, and either $\lambda_e=\lambda_c$ or $x_ax_bx_e\in S_I$.
\end{lem}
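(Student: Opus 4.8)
The plan is to refine the argument of Lemma~\ref{lemma: any d} so as to keep track of \emph{which} new vertex $e$ appears, upgrading ``$e\neq c$'' to ``$e\notin\{a,b,c\}$'' together with the dichotomy on $\lambda_e$. As in the proof of Lemma~\ref{lemma: any d}, from the hypotheses $a,b,c$ are distinct, and by Lemma~\ref{lemma: main criterion} applied to $M(a,b;c)$ there is a monomial $m\in S_I$ with $\deg_a m+\deg_b m\geqslant d-1=2$ and $\deg_c m=0$. Since $d=3$ and $m$ has degree $3$, either $m=x_a^2 x_e$, or $m=x_b^2 x_e$, or $m=x_a x_b x_e$, for some $e\in[N]\setminus\{c\}$ (in the first two cases $e$ could a priori coincide with $a$ or $b$; in the last case $e\notin\{a,b\}$ is automatic because $\deg_a m+\deg_b m\geqslant 2$ forces... actually $x_ax_b^2$ etc.\ are allowed, so one must also allow $e\in\{a,b\}$ there). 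The weight identity $\lambda(m)=\lambda$ combined with $\lambda_a^{2}\lambda_c=\lambda_b^{2}\lambda_c=\lambda$ gives, exactly as before, $\lambda_e^{2}=\lambda_c^{2}$, hence $(e,f)\in E$ because $\lambda_e^{d-1}\lambda_f=\lambda_c^{d-1}\lambda_f=\lambda$ and $(c,f)\in E$.

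Next I would eliminate the possibility that $e\in\{a,b,c\}$. We already have $e\neq c$. If $m=x_a^2x_e$ with $e=a$, then $m=x_a^3$, so $\lambda_a^3=\lambda$; combined with $\lambda_a^2\lambda_c=\lambda$ this gives $\lambda_a=\lambda_c$, contradicting $\lambda_c\neq\lambda_a$. Similarly $m=x_a^2x_b$ gives $\lambda_a^2\lambda_b=\lambda=\lambda_a^2\lambda_c$, so $\lambda_b=\lambda_c$, contradicting $\lambda_c\neq\lambda_b$; and the cases $m=x_b^2 x_b$, $m=x_b^2 x_a$, $m=x_ax_b^2$, $m=x_a^2 x_b$ are handled the same way. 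So in every case where $m$ is \emph{not} of the form $x_ax_bx_e$ with $e\notin\{a,b,c\}$ we are forced into $m=x_a x_b x_e$. This splits the argument cleanly into two scenarios: either we have already produced $e\notin\{a,b,c\}$ with $(e,f)\in E$ and $\lambda_e^2=\lambda_c^2$, or $x_ax_bx_e\in S_I$ with $e\notin\{a,b,c\}$ and $(e,f)\in E$.

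The remaining point is that in the first scenario I only get $\lambda_e^2=\lambda_c^2$, whereas the statement asks for $\lambda_e=\lambda_c$. Here I would argue that if $\lambda_e\neq\lambda_c$ then $\lambda_e=-\lambda_c$ (as $\lambda_e^2=\lambda_c^2$ and we are in $\CC^\times$), and then I want to rerun the extraction of a monomial so as to land in the case $x_ax_bx_e\in S_I$. Concretely: having $(e,f)\in E$, i.e.\ $x_e^2 x_f\in S_I$, and wanting $x_ax_bx_e\in S_I$, I would instead apply Lemma~\ref{lemma: main criterion} more carefully — the clean way is to note that $x_a^2 x_e\in S_I$ or $x_b^2 x_e\in S_I$ must have been the monomial $m$ we found in this subcase, and then use the relation $\lambda_a^2\lambda_e=\lambda$ together with $\lambda_a^2\lambda_c=\lambda$ to get $\lambda_e=\lambda_c$ directly, contradicting $\lambda_e=-\lambda_c$. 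Thus in the ``$x_a^2x_e$ or $x_b^2x_e$'' subcase one actually always gets $\lambda_e=\lambda_c$, and in the ``$x_ax_bx_e$'' subcase one gets that relation instead. This is exactly the stated dichotomy.

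The main obstacle I anticipate is bookkeeping: making sure the case $e\in\{a,b\}$ is genuinely excluded in the $x_ax_bx_e$ branch (it is, since $\deg_a m+\deg_b m\geqslant 2$ leaves only degree $1$ for the third variable, but if that third variable is $a$ or $b$ then $m\in\{x_a^3,x_a^2x_b,x_ax_b^2,x_b^3\}$, which are the already-excluded monomials with $e\in\{a,b,c\}$), and correctly pinning down in which subcase one gets $\lambda_e=\lambda_c$ versus $x_ax_bx_e\in S_I$. None of the algebra is hard; the care is entirely in the enumeration of degree-$3$ monomials supported on $\{a,b,e\}$ and in not conflating ``$e$ equals one of $a,b,c$'' with ``$\lambda_e$ equals one of $\lambda_a,\lambda_b,\lambda_c$''.
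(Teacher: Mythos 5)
Your proposal is correct and follows essentially the same route as the paper: apply Lemma \ref{lemma: main criterion} to $M(a,b;c)$, rule out $\deg_a m+\deg_b m=3$ using $\lambda_c\ne\lambda_a,\lambda_b$, and then split into the subcase $m=x_a^2x_e$ (or $x_b^2x_e$), which gives $\lambda_e=\lambda_c$ directly, versus $m=x_ax_bx_e$, which gives $\lambda_e^2=\lambda_c^2$ and hence $(e,f)\in E$. The brief detour about $\lambda_e=-\lambda_c$ is unnecessary (as you note, the relation $\lambda_a^2\lambda_e=\lambda=\lambda_a^2\lambda_c$ already forces $\lambda_e=\lambda_c$ in that branch), but the final argument is complete and matches the paper's.
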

\begin{proof}
We have $a,b,c$ distinct. By Lemma \ref{lemma: main criterion}, there exists a monomial $m$ with $\deg_a m+ \deg_b m\geqslant 2$ and $\deg_c m=0$. Suppose $\deg_a m+ \deg_b m=3$. If $\deg_a m=3$, then $\lambda_a^3=\lambda_a^2\lambda_c=\lambda$, which implies that $\lambda_a=\lambda_c$, a contradiction. The case $\deg_b m=3$ is similar. If $\deg_a m=2, \deg_b m=1$, then $\lambda_a^2\lambda_b=\lambda_a^2\lambda_c=\lambda$, which implies that $\lambda_b=\lambda_c$, again a contradiction (the case $\deg_b m=2$ is similar). Therefore, we must have $\deg_a m+ \deg_b m=2$. In particular, there exists $e\in[N]\backslash\{a,b,c\}$, such that $\deg_e m=1$.

If $\deg_a m=2$, then $m=x_a^2 x_e$. As both $x_a^2 x_c, x_a^2 x_e$ belong to $S$, we have $\lambda_e=\lambda_c$. Combining with the fact $x_c^2 x_f\in S$, we conclude that $x_e^2 x_f\in S$. The case $\deg_b m=2$ is similar.

If $\deg_a m=\deg_b m=1$, then $m=x_ax_bx_e$. From $$\lambda_a^2\lambda_c=\lambda_b^2\lambda_c=\lambda_a\lambda_b\lambda_e=\lambda$$ we have
$$\lambda_a^2 \lambda_b^2\lambda_c^2=\lambda_a^2\lambda_b^2\lambda_e^2=\lambda^2,$$
which implies that $\lambda_c^2=\lambda_e^2$. Thus $(e,f)\in E$.
\end{proof}

\subsection{Y-type}
\label{Y-type}
Recall that a cubic polynomial of the form \eqref{equation: Y} is called of type $Y$. We next show the smoothness of a $Y$-type polynomial.

\begin{prop}
\label{proposition: Y smooth}
A $Y$-type cubic polynomial is smooth.
\end{prop}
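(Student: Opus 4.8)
The goal is to show that the $Y$-type cubic polynomial
\begin{equation*}
F = x_1^2 x_2+\cdots+x_{a-1}^2 x_a+ y_1^2 y_2+ \cdots+y_{b-1}^2 y_b+  x_a^2 z+ y_b^2 z+ z^2 w+w^3+ x_a y_b w
\end{equation*}
is smooth, i.e.\ the only common zero of all partial derivatives is the origin. The plan is to compute the partial derivatives with respect to each variable group and show, by a descending/ascending induction along the chains of variables, that every coordinate of a singular point must vanish. First I would list the partials: $\partial F/\partial x_i = 2x_{i-1}x_i + x_i^2$-type relations coming from the chain $x_1^2x_2+\cdots+x_{a-1}^2x_a$, with the caveat that $x_a$ also appears in $x_a^2 z$ and $x_a y_b w$; symmetrically for the $y_j$; and $\partial F/\partial z = 2x_a z + 2y_b z + 2zw$ (wait — recompute: $x_a^2 z + y_b^2 z + z^2 w$ gives $\partial_z F = x_a^2 + y_b^2 + 2zw$), $\partial F/\partial w = z^2 + 3w^2 + x_a y_b$.

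The key steps, in order. \emph{Step 1:} From $\partial F/\partial x_1 = 2x_1 x_2 = 0$ one gets $x_1 x_2 = 0$, but more usefully, for $2 \le i \le a-1$, $\partial F/\partial x_i = x_{i-1}^2 + 2 x_i x_{i+1}$ (interpreting $x_{a}$ correctly), and $\partial F/\partial x_1 = 2x_1 x_2$. Setting all of these to zero and arguing as in the standard proof that type-$T$ and type-$K$ polynomials are smooth, I would first show $x_1 = 0$: indeed $\partial F/\partial x_2 = x_1^2 + 2x_2 x_3 = 0$, and working down the chain one shows successively $x_1 = x_2 = \cdots = x_{a-1} = 0$ \emph{provided} we already know $x_a = 0$; so the real content is handling the "end" variables $x_a, y_b, z, w$ which are coupled through the last four terms. \emph{Step 2 (the crux):} analyze the subsystem in $x_a, y_b, z, w$. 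We have (once $x_{a-1}=y_{b-1}=0$ is known, or independently) the equations
\begin{align*}
\partial F/\partial x_a &= x_{a-1}^2 + 2x_a z + y_b w = 0,\\
\partial F/\partial y_b &= y_{b-1}^2 + 2y_b z + x_a w = 0,\\
\partial F/\partial z &= x_a^2 + y_b^2 + 2zw = 0,\\
\partial F/\partial w &= z^2 + 3w^2 + x_a y_b = 0.
\end{align*}
I would show this forces $x_a = y_b = z = w = 0$; the natural approach is to treat it as the smoothness of the cubic $x_a^2 z + y_b^2 z + z^2 w + w^3 + x_a y_b w$ in four variables (which is itself the core $Y$-shaped piece), perhaps by subtracting suitable multiples of the equations to eliminate terms, or by passing to the variables $u = x_a + y_b$, $v = x_a - y_b$ to symmetrize. \emph{Step 3:} once $x_a = y_b = z = w = 0$ is established, feed this back into the chains: $\partial F/\partial x_i = x_{i-1}^2 + 2x_i x_{i+1} = 0$ for $i = a-1, a-2, \ldots$ gives $x_{a-1} = 0$ (since $x_a = 0$ kills the $2x_ix_{i+1}$ term at $i=a-1$... careful: at $i=a-1$, $\partial F/\partial x_{a-1} = x_{a-2}^2 + 2x_{a-1}x_a$, so $x_a=0$ gives $x_{a-2}=0$, then descend), and a short downward induction yields $x_1 = \cdots = x_{a-1} = 0$, and symmetrically all $y_j = 0$. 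Hence the only singular point is the origin.

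\textbf{Main obstacle.} The essential difficulty is Step 2: verifying that the four-variable "core" cubic $G_0 = x_a^2 z + y_b^2 z + z^2 w + w^3 + x_a y_b w$ is smooth. The chain pieces are handled by the same elementary argument already known to work for types $K$ and $T$, but the core piece genuinely mixes five monomials in four variables and its smoothness is not a formal consequence of anything stated earlier. I expect to prove it by a direct but careful elimination: from $\partial_z G_0 = x_a^2 + y_b^2 + 2zw = 0$ and $\partial_w G_0 = z^2 + 3w^2 + x_a y_b = 0$, combined with $\partial_{x_a} G_0 = 2x_a z + y_b w = 0$ and $\partial_{y_b} G_0 = 2 y_b z + x_a w = 0$, one can add and subtract the first two of these to get $2z(x_a+y_b) = -w(x_a+y_b)$ wait—$2z(x_a + y_b) + w(x_a+y_b) = 0$, i.e.\ $(x_a+y_b)(2z+w) = 0$, and $(x_a - y_b)(2z - w) = 0$; then a case analysis on which factor vanishes, substituted into the $\partial_z$ and $\partial_w$ equations, should collapse everything to zero. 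One must be slightly attentive to characteristic issues, but we are over $\CC$, so dividing by $2$ and $3$ is harmless. A cleaner alternative, which I would try first, is to exhibit $G_0$ as a sum of a type-$T$ chain and lower pieces after a linear change of coordinates, so that smoothness follows from Definition \ref{definition: KTY}-style reasoning; if that fails, the explicit elimination above is the fallback.
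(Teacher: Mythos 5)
Your handling of the four-variable core is correct and is in fact a cleaner route than the paper's: adding and subtracting $\partial_{x_a}G_0$ and $\partial_{y_b}G_0$ to get $(x_a+y_b)(2z+w)=0$ and $(x_a-y_b)(2z-w)=0$, then feeding the four resulting cases into $\partial_z G_0=0$ and $\partial_w G_0=0$ (the two mixed cases reduce to $15z^2=0$, the two degenerate ones are immediate), does kill the core in one stroke, uniformly in $(a,b)$. The paper instead verifies the base case $(a,b)=(1,1)$ by normalizing $w=1$ and grinding through the resulting system, then inducts on $(a,b)$ by peeling off $x_1$, with extra base cases $(2,1)$ and $(2,2)$ to handle short chains; your elimination would replace all of that base-case computation.

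The gap is at the junction between the chains and the core, and as written your plan is circular there. Step 1 establishes $x_1=\cdots=x_{a-1}=0$ only \emph{provided} $x_a=0$ is already known, while Step 2 analyzes the core as the system for $G_0$, i.e.\ with the terms $x_{a-1}^2$ and $y_{b-1}^2$ deleted from $\partial_{x_a}F$ and $\partial_{y_b}F$ --- legitimate only once $x_{a-1}=y_{b-1}=0$ is already known. Each step presupposes the other, and the factorization $(x_a\pm y_b)(2z\pm w)=0$ genuinely fails when $x_{a-1}^2\pm y_{b-1}^2\neq 0$, so Step 2 cannot be run ``independently.'' The repair is a top-down descent plus a case split you have not written: $\partial_{x_1}=2x_1x_2=0$ together with $\partial_{x_2}=x_1^2+2x_2x_3=0$ forces $x_1=0$, and recursing gives $x_1=\cdots=x_{a-2}=0$ unconditionally; then $\partial_{x_{a-1}}=2x_{a-1}x_a=0$ leaves the alternative $x_{a-1}=0$ or $x_a=0$, and the branch $x_{a-1}\neq0$, $x_a=0$ must be excluded by hand (from $\partial_{x_a}=x_{a-1}^2+y_bw=0$ one gets $y_b,w\neq0$; the $y$-descent then gives $y_{b-1}=0$, so $\partial_{y_b}=2y_bz=0$ forces $z=0$, and $\partial_w=z^2+3w^2=0$ forces $w=0$, a contradiction), together with its mirror image on the $y$-side. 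Only after that does your clean core elimination apply. None of this is difficult, but it is precisely the bookkeeping the paper's induction on $(a,b)$ is structured to absorb, and your proposal currently skips it.
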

\begin{proof}
We use the expression \eqref{equation: Y} for a $Y$-type polynomial. We prove the smoothness by induction on $(a,b)$ for $a,b\geqslant 1$. If $(a,b)=(1,1)$, we let $F=x^2z+y^2z+z^2w+w^3+xyw$ be the corresponding $Y$-type polynomial, where we simply replace $x_1, y_1$ by $x, y$. We have:
\begin{numcases}{}
    \frac{\partial F}{\partial x}=2xz+yw=0 \label{partial x} \\
    \frac{\partial F}{\partial y}=2yz+xw=0 \label{partial y}\\
    \frac{\partial F}{\partial z}=x^2+y^2+2zw=0 \label{partial z}\\
    \frac{\partial F}{\partial w}=z^2+3w^2+xy=0.\label{partial w}
\end{numcases}

Suppose to the contrary that $(x,y,z,w)\neq (0,0,0,0)$ is a solution to the set of equations.
Suppose $z=w=0$, then from \eqref{partial z},\eqref{partial w} we obtain $x^2+y^2=xy=0$, which implies that $x=y=0$, contradiction.
Suppose $z\neq 0$ and $w=0$, then from \eqref{partial x},\eqref{partial y} we obtain $x=y=0$. Then \eqref{partial w} cannot hold, contradiction.
Hence we have $w\neq 0$. We assume (without loss of generality) $w=1$.

Now, from \eqref{partial x} we have $y=-2xz$. Combining with \eqref{partial y}, we obtain $-4xz^2+x=0$, hence either $x=0$ or $z^2=\frac{1}{4}$. If $x=0$, then by \eqref{partial x} we have $y=0$. By \eqref{partial z} we have $z=0$. Then the equation \eqref{partial w} cannot hold, contradiction. If $z^2=\frac{1}{4}$, then by \eqref{partial w} we have $xy=-\frac{13}{4}$. If $z=\frac{1}{2}$, then $y=-x$ and $x^2=y^2=-xy=\frac{13}{4}$, which contradicts with \eqref{partial z}. The case $z=-\frac{1}{2}$ is similar. We conclude the smoothness of $F$ for $(a,b)=(1,1)$.

We then prove the smoothness for $(a,b)=(2,1)$. Let $F=x_1^2x_2+x_2^2z+y_1^2z+z^2w+w^3+x_2y_1w$ be the corresponding polynomial. We consider the following $4$ partial derivatives:
\begin{numcases}{}
    \frac{\partial F}{\partial x_1}=2x_1x_2=0 \label{2partial x_1} \\
    \frac{\partial F}{\partial x_2}=x_1^2+2x_2z+y_1w=0 \label{2partial x_2} \\
    \frac{\partial F}{\partial y_1}=2y_1z+x_2w=0 \label{2partial y_1}\\
    \frac{\partial F}{\partial z}=x_2^2+y_1^2+2zw=0 \label{2partial z}
\end{numcases}
From \eqref{2partial x_1} we know that $x_1=0$ or $x_2=0$. If $x_1=0$, then the situation is reduced to the case $(a,b)=(1,1)$. Thus we must have $(x_2,y_2,z,w)=(0,0,0,0)$. Next we assume $x_1\neq0$ and $x_2=0$. By \eqref{2partial x_2} and \eqref{2partial y_1} we have $y_1w\neq 0$ and $y_1z=0$, hence we have $y_1\ne 0$ and $z=0$. Then the equation \eqref{2partial z} cannot hold. We conclude the smoothness of $F$ for $(a,b)=(2,1)$.

Next we consider the case $(a,b)=(2,2)$. Let $F=x_1^2x_2+x_2^2z+y_1^2y_2+y_2^2z+z^2w+w^3+x_2y_2w$ be the corresponding polynomial. We consider
\begin{numcases}{}
    \frac{\partial F}{\partial x_1}=2x_1x_2=0 \label{3partial x_1} \\
    \frac{\partial F}{\partial x_2}=x_1^2+2x_2z+y_2w=0 \label{3partial x_2} \\
    \frac{\partial F}{\partial y_1}=2y_1y_2=0 \label{3partial y_1}
\end{numcases}

If $x_1=0$, then we can reduce the situation to the case $(a,b)=(2,1)$. Next we assume $x_1\neq 0$ and $x_2=0$. Symmetrically, we assume moreover $y_1\neq 0, y_2=0$. Then the equation \eqref{3partial x_2} cannot hold, a contradiction. The case $(a,b)=(2,2)$ is done.

Now, we let $F_{a,b}$ be the equation  \ref{equation: Y}. We assume the smoothness of $F_{a,b}$  for $(a,b)$ with $a\geqslant 2,b\geqslant 1$. We next show that $F_{a+1,b}$ is smooth. Firstly, if $x_1=0$, then we can reduce the situation to the case of $(a,b)$. Then we assume $x_1\neq 0$ and $x_2=0$. But in such a situation the partial derivative
$$\frac{\partial F_{a+1,b}}{\partial x_2}=x_1^2+2x_2x_3$$
cannot vanish. This completes the proof of the proposition.
\end{proof}

The next proposition characterizes the groups of diagonal automorphisms of polynomials of type $Y$.
\begin{prop}
For the $Y$-type polynomial
\begin{equation*}
F=x_1^2 x_2+\cdots+x_{a-1}^2 x_a+ y_1^2 y_2+ \cdots+y_{b-1}^2 y_b+  x_a^2 z+ y_b^2 z+ z^2 w+w^3+ x_a y_b w
\end{equation*}
defined in \eqref{equation: Y}. Suppose $a\geqslant b$, then the group $G_F$ of diagonal automorphisms of $V(F)$ is generated by two elements $g_1, g_2$ which can be described as follows. A diagonal element of $V(F)$ is denoted by $\diag[\lambda_{x_1}:\cdots:\lambda_{x_a}:\lambda_{y_1}:\cdots:\lambda_{y_b}:z:w]$. The element $g_1$ is determined by $\lambda_{x_1}=\exp(\frac{(-1)^{a-1}\pi \sqrt{-1}}{2^a})$, $\lambda_{y_1}=\exp(\frac{(-1)^b\pi \sqrt{-1}}{2^b})$ and $\lambda_w=1$; another element $g_2$ is determined by $\lambda_{x_1}=1$, $\lambda_{y_1}=\exp(\frac{(-1)^{b-1}\pi \sqrt{-1}}{2^{b-2}})$ and $\lambda_w=1$. In particular $G_F\cong C_{2^{a+1}}\oplus C_{2^{b-1}}$.
\end{prop}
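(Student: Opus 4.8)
The plan is to compute the group $G_F$ of diagonal automorphisms directly from the defining relations, by writing out what it means for $\diag(\lambda_{x_1},\dots,\lambda_{x_a},\lambda_{y_1},\dots,\lambda_{y_b},\lambda_z,\lambda_w)$ to rescale $F$ by a common factor. Each monomial of $F$ must be sent to the same scalar multiple of itself; equating the scalars coming from all the monomials gives a system of multiplicative equations among the $\lambda$'s, and the solution set of that system is exactly $G_F$. First I would normalize: since we are inside $\PGL$, we may fix $\lambda_w=1$, and I will carry this normalization throughout. Then the monomial $w^3$ already forces the common scaling factor to be $1$, i.e. $F$ must be sent to $F$ itself (not merely a scalar multiple); this is the key simplification that makes the computation tractable.

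Next I would record the relations coming from each monomial, set equal to $1$ (the value forced by $w^3$):
\begin{equation*}
\lambda_{x_i}^2\lambda_{x_{i+1}}=1\ (1\le i\le a-1),\quad \lambda_{y_j}^2\lambda_{y_{j+1}}=1\ (1\le j\le b-1),\quad \lambda_{x_a}^2\lambda_z=\lambda_{y_b}^2\lambda_z=\lambda_z^2=\lambda_{x_a}\lambda_{y_b}=1.
\end{equation*}
From $\lambda_z^2=1$ and $\lambda_{x_a}^2\lambda_z=1$ I get $\lambda_{x_a}^4=1$, and similarly $\lambda_{y_b}^4=1$; together with $\lambda_{x_a}\lambda_{y_b}=1$ this pins down $\lambda_{y_b}=\lambda_{x_a}^{-1}$ and $\lambda_z=\lambda_{x_a}^{-2}=\lambda_{x_a}^2$. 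Then the chain relations $\lambda_{x_i}^2\lambda_{x_{i+1}}=1$ let me solve recursively from the $x_a$ end back to $x_1$: each $\lambda_{x_i}$ is a square root of $\lambda_{x_{i+1}}^{-1}$, so $\lambda_{x_1}$ can be any $2^a$-th (actually $2^{a+1}$-th, once the order-$4$ constraint at the end is included) root of unity of the appropriate order, and $\lambda_{x_1}$ determines all $\lambda_{x_i}$ uniquely given a consistent choice; similarly on the $y$-side $\lambda_{y_1}$ ranges over $2^{b+1}$-th roots of unity, but the coupling $\lambda_{y_b}=\lambda_{x_a}^{-1}$ cuts the $y$-freedom down. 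Carefully tracking the orders — including the sign bookkeeping encoded in the factors $(-1)^{a-1}$, $(-1)^b$, $(-1)^{b-1}$ in the statement, which come from the alternating square-root choices $\lambda_{x_i}=\pm\lambda_{x_{i+1}}^{-1/2}$ made consistently — should show that $G_F$ is generated by the two displayed elements $g_1,g_2$ and that the map $(\lambda_{x_1},\lambda_{y_1})\mapsto g$ identifies $G_F$ with $C_{2^{a+1}}\oplus C_{2^{b-1}}$.

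I expect the main obstacle to be precisely this last bookkeeping: verifying that the two exhibited generators $g_1,g_2$ have the claimed orders $2^{a+1}$ and $2^{b-1}$, that they together generate the full solution set (not a proper subgroup), and that they are independent, i.e. $\langle g_1\rangle\cap\langle g_2\rangle=\{e\}$. Getting the group to come out as $C_{2^{a+1}}\oplus C_{2^{b-1}}$ rather than, say, $C_{2^{a+1}}\oplus C_{2^{b+1}}$ hinges on seeing that the constraint $\lambda_{x_a}\lambda_{y_b}=1$ (together with the order-$4$ conditions at the central vertex) removes exactly two powers of $2$ from the naive $y$-freedom; the cleanest way to see this is to change generators, using $g_1$ to absorb the part of the $y$-scaling that is forced by $\lambda_{x_a}$, so that $g_2$ only moves $\lambda_{y_1},\dots,\lambda_{y_b}$ subject to $\lambda_{y_b}=1$, which is what the stated formula for $g_2$ (with $\lambda_{x_1}=1$) reflects. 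The hypothesis $a\ge b$ is used only to choose which of the two chains plays the role of the "long" one; the argument is symmetric otherwise. The smoothness of $F$, already established in Proposition~\ref{proposition: Y smooth}, guarantees $V(F)$ is a legitimate hypersurface but is not otherwise needed for the group computation.
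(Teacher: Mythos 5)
Your proposal is correct and follows essentially the same route as the paper: normalize $\lambda_w=1$ so that $w^3$ forces the common scalar to be $1$, read off the multiplicative relations monomial by monomial, observe that $\lambda_{x_1}$ determines the whole $x$-chain and must be a $2^{a+1}$-th root of unity, divide by a suitable power of $g_1$ to reduce to the $y$-chain with $\lambda_{y_b}=1$ (whence a power of $g_2$), and use $a\geqslant b$ to see $\langle g_1\rangle\cap\langle g_2\rangle$ is trivial. The only cosmetic difference is that the paper solves the chain relations forward, via $\lambda_{x_i}=\lambda_{x_1}^{(-2)^{i-1}}$, rather than backward by iterated square roots, which avoids the spurious choices your phrasing introduces.
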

\begin{proof}
From $a\geqslant b$, we have $\langle g_1\rangle\cap\langle g_2\rangle=\{\diag[1:\cdots:1]\}$. Thus $\langle g_1, g_2\rangle\cong C_{2^{a+1}}\oplus C_{2^{b-1}}$.

Suppose $g=\diag[\lambda_{x_1}(g): \cdots: \lambda_w(g)]$ is a diagonal automorphism of $V(F)$. Without loss of generality, we let $\lambda_w(g)=1$. Then we have $\lambda(g)=\lambda_w(g)^3=1$.


We have $\lambda_{x_i}(g)=\lambda_{x_1}(g)^{(-2)^{i-1}}$ for all positive integers $i\leqslant a$. Since $x_a^2 z, z^2 w\in S(F)$, we have $\lambda_{x_1}(g)^{(-2)^{a+1}}=\lambda_w(g)=1$. Since $\lambda_{x_1}(g_1)=\exp(\frac{(-1)^{a-1}\pi \sqrt{-1}}{2^a})$ is a primitive $2^{a+1}$-root of unity, there exists a positive integer $k$ such that $\lambda_{x_1}(g)=\lambda_{x_1}(g_1)^k$. Let $g'\coloneqq g/g_1^k$. Then we can write $g'=\diag[\lambda_{x_1}(g'):\cdots:\lambda_w(g')]$ such that $\lambda_{x_i}(g')=\lambda_w(g')=1$. Combining with $x_a y_b w\in S$, we have $\lambda_{x_a}(g')=\lambda_{y_b}(g')=1$. It is then straightforward to show that $g'$ is a power of $g_2$. We conclude that $g\in \langle g_1, g_2\rangle$.
\end{proof}


\section{Cubic Fourfolds}
\label{section: cubic fourfold}
\subsection{Main Theorem}
In this section we concentrate on the case of cubic fourfolds. The main result is Theorem \ref{theorem: main}, where we essentially classify all liftable abelian automorphism groups of smooth cubic fourfolds by finding all non-simple and maximal cases.

Recall that in Definition \ref{definition: KTY} we have defined what are simple polynomials. By Definition \ref{definition: simple large}, a set $S$ of monomials is called simple, if $S$ contains the monomials of a simple cubic polynomial in $x_1, \cdots, x_N$.
The terminology \underline{simple} used here is slightly different from that is used in \cite{zheng2020abelian}. In \cite{zheng2020abelian} we did not consider polynomials of type $Y$. The following lemma is clear.





\begin{lem}
\label{lemma: d=3 c>1}
For $d=3$ and $I=(G,S)$, we have either $c_I=0$ or $c_I\geqslant 2$.
\end{lem}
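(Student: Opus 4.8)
The plan is to argue by contradiction, assuming $c_I=1$. By Definition~\ref{definition: s and c} this means there is a chain of indices $(i_1,\cdots,i_{s_I})$ obeying rules (\ref{rule chain})--(\ref{rule lambda}) whose closing edge, required by (\ref{rule ring}), is $(i_{s_I},i_{s_I-1})\in E_{S_I}$. Since the value $c=1$ is only admissible when $s_I\geqslant 2$, the index $i_{s_I-1}$ really occurs in the chain, and rule (\ref{rule chain}) with $t=s_I-1$ supplies the opposite edge $(i_{s_I-1},i_{s_I})\in E_{S_I}$ as well. So both monomials $x_{i_{s_I-1}}^{2}x_{i_{s_I}}$ and $x_{i_{s_I}}^{2}x_{i_{s_I-1}}$ lie in $S_I$ (here is the one place where $d=3$, i.e. $d-1=2$, is used).

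Next I would read off the character constraints. Abbreviating $u=\lambda_{i_{s_I-1}}$ and $v=\lambda_{i_{s_I}}$, the definition of $S_I$ forces $u^{2}v=\lambda$ and $v^{2}u=\lambda$, whence $u^{2}v=v^{2}u$ and therefore $uv(u-v)=0$. As characters take values in $\CC^{\times}$, the factors $u$ and $v$ are nonzero, so $u=v$. This contradicts rule (\ref{rule lambda}), which demands that $\lambda_{i_1},\cdots,\lambda_{i_{s_I}}$ be pairwise distinct. Hence $c_I\neq 1$, and since $c_I\geqslant 0$ always, we get $c_I=0$ or $c_I\geqslant 2$, as claimed.

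There is no real obstacle in this argument; the only point worth flagging is that it is genuinely special to $d=3$. For a general degree the two edges would give $u^{d-1}v=v^{d-1}u=\lambda$, hence only $u^{d-2}=v^{d-2}$, which for $d\geqslant 4$ need not force $u=v$ — so the dichotomy really exploits that $d-2=1$. (Equivalently, one can phrase the conclusion positively: if $u=v$ then $\lambda_{i_{s_I}}^{3}=u^{2}v=\lambda$, so $x_{i_{s_I}}^{3}\in S_I$ and $(i_{s_I},i_{s_I})\in E_{S_I}$, meaning the very same chain already realises $c=0$; but in the presence of rule (\ref{rule lambda}) the configuration $u=v$ simply cannot occur, which is the contradiction above.)
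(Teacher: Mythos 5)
Your argument is correct and is precisely the computation the paper has in mind when it declares this lemma ``clear'' without proof: a closing edge $(i_{s_I},i_{s_I-1})$ together with the chain edge $(i_{s_I-1},i_{s_I})$ forces $\lambda_{i_{s_I-1}}^{2}\lambda_{i_{s_I}}=\lambda_{i_{s_I}}^{2}\lambda_{i_{s_I-1}}=\lambda$, hence $\lambda_{i_{s_I-1}}=\lambda_{i_{s_I}}$ (evaluate at each $g\in G$, where both characters are nonzero, rather than literally writing $uv(u-v)=0$), contradicting rule (\ref{rule lambda}). Your remark that the step $u^{d-1}v=v^{d-1}u\Rightarrow u=v$ genuinely needs $d=3$ is also apt and explains why the lemma is stated only in that degree.
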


Recall from \S\ref{subsection: poset} that $\calP_{d,N}$ is the set of equivalence classes of $I=(G, \lambda, \lambda_1, \cdots, \lambda_N)$, and it is naturally a poset. The next theorem describes the maximal elements in $\calP_{3,6}$.

\begin{thm}
\label{theorem: main}
Suppose $G$ is an abelian group with a faithful and liftable action on a smooth cubic fourfold $X$. Choose a coordinate system $x_1, \cdots, x_6$ for $\PP^5$ such that the action of $G$ is diagonal. Let $I=(G, \lambda, \lambda_1, \cdots, \lambda_6)$ be the associated data. Suppose $[I]$ is maximal in $\calP_{3,6}$, then either $S=S_I$ is simple, or we have the following possibilities (up to a permutation of coordinates $x_1, \cdots, x_6$):
\begin{enumerate}[(1)]
    \item $G=\{\diag[\omega:\omega^5:\omega^3:1:\omega^4:\omega^2]|\omega^6=1\}\cong C_2\oplus C_3,\\S=\{x_1^2x_2,x_2^2x_3,x_3^2x_1,x_4^2x_1,x_5^2x_2,x_6^2x_3,x_3x_4x_5,x_1x_5x_6,x_2x_4x_6\},$
    \item $G=\{\diag[\omega:\omega^6:\omega^4:1:\omega^5:\omega^2]|\omega^8=1\}\cong C_8,\\S=\{x_1^2x_2,x_2^2x_3,x_3^2x_4,x_4^3,x_5^2x_2,x_1x_5x_6,x_6^2x_3,x_2x_6x_4\},$
    \item $G=\{\diag[\omega:\omega^2:1:\zeta:\omega^2\zeta:1]|\omega^4=\zeta^2=1\}\cong C_2\oplus C_4,\\S=\{x_1^2x_2,x_2^2x_3,x_2^2x_6,x_2x_4x_5,x_3^3,x_3^2x_6,x_4^2x_3,x_5^2x_3,x_6^2x_3,x_4^2x_6,x_5^2x_6,x_6^3\},$
    \item $G=\{\diag[\omega:\omega^2:1:\omega\zeta:\omega^2\zeta:1]|\omega^4=\zeta^2=1\}\cong C_2\oplus C_4,\\S=\{x_1^2x_2,x_2^2x_3,x_3^3,x_4^2x_2,x_1x_4x_5,x_5^2x_3,x_5^2x_6,x_6^2x_3,x_2^2x_6,x_3^2x_6,x_6^3\}$
    \item $G=\{\diag[\omega:\omega^2:1:\omega\zeta:\omega^2\zeta:\zeta]|\omega^4=\zeta^2=1\}\cong C_2\oplus C_4,\\S=\{x_1^2x_2,x_2^2x_3,x_3^3,x_4^2x_2,x_1x_4x_5,x_5^2x_3,x_2x_5x_6,x_6^2x_3\},$
    \item $G=\{\diag[\omega:\omega^2:1:\omega^2\zeta:\zeta:\omega^3\zeta]|\omega^4=\zeta^2=1\}\cong C_2\oplus C_4,\\S=\{x_1^2x_2,x_2^2x_3,x_3^3,x_4^2x_3,x_5^2x_3,x_2x_4x_5,x_1x_5x_6,x_6^2x_2\},$
    \item $G=\{\diag[\alpha:\beta:\alpha\beta:1:\omega:\zeta]|\alpha^2=\beta^2=\omega^3=\zeta^3=1\}\cong C_2^2\oplus C_3^2,\\S=\{x_1^2x_4,x_2^2x_4,x_3^2x_4,x_1x_2x_3,x_4^3,x_5^3,x_6^3\},$
    \item $G=\{\diag[\alpha:\beta:\alpha\beta:\omega:1:\omega^4]|\alpha^2=\beta^2=\omega^6=1\}\cong C_2^3\oplus C_3,\\S=\{x_1^2x_5,x_2^2x_5,x_3^2x_5,x_1x_2x_3,x_5^3,x_4^2x_6,x_6^3\}.$
\end{enumerate}
Here $C_n\coloneqq \ZZ/n\ZZ$ is the cyclic group of order $n$.
\end{thm}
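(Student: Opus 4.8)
The plan is to run a case analysis on the pair $(s_I, c_I)$ from Definition~\ref{definition: s and c}, using Lemma~\ref{lemma: restriction of edges in a spoon} to control which edges of $E_{S_I}$ can appear inside a maximal chain. Since $N=6$ and $d=3$, Proposition~\ref{proposition: loop} together with Lemma~\ref{lemma: d=3 c>1} sharply limits the admissible loop structures: either $c_I = 0$ (so the chain terminates in a $T$-type tail, ending at some index $i_s$ with $x_{i_s}^3 \in S_I$ after a possible relabelling), or $c_I \ge 2$ and $N = 6 \ge s_I + c_I$, forcing $(s_I,c_I) \in \{(2,2),(3,2),(4,2),(3,3)\}$ (with $s_I - 1 > c_I$ excluded in the boundary cases handled separately). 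For each value of $(s_I,c_I)$ I would first extract from the chain a ``backbone'' sub-polynomial of type $K$ or $T$ on the indices $i_1,\dots,i_{s_I}$, then analyze how the remaining $6 - s_I$ coordinates attach to it. The key tool for the attachment analysis is Lemma~\ref{lemma: key} (the $d=3$ refinement): whenever we have a ``wedge'' $\{(a,c),(b,c),(c,f)\} \subseteq E_{S_I}$ with $\lambda_c \ne \lambda_a,\lambda_b$, we get a new vertex $e \notin \{a,b,c\}$ with $(e,f) \in E_{S_I}$, and moreover either $\lambda_e = \lambda_c$ or $x_a x_b x_e \in S_I$ — this is precisely what produces the ``mixed'' monomials like $x_3 x_4 x_5$, $x_1 x_5 x_6$, $x_2 x_4 x_6$ appearing in cases (1), (7), (8).

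Concretely, I would organize the argument as follows. \emph{Step 1:} Establish that if $S_I$ is not simple, then $s_I \le 4$ (a long chain of length $5$ or $6$ on distinct characters, being large, forces via Lemma~\ref{lemma: main criterion} and Lemma~\ref{lemma: restriction of edges in a spoon} a simple $K$- or $T$-type decomposition exhausting all six variables). \emph{Step 2:} Treat $c_I = 0$. Here the maximal chain is $(i_1,\dots,i_{s_I})$ with $x_{i_{s_I}}^3 \in S_I$; peel off the type-$T$ tail and recurse on the complementary variables, where one uses Lemma~\ref{lemma: any d} to show every remaining vertex still has an outlet landing either on the tail or on a new short chain. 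The non-simple configurations that survive are exactly (2), (3), (4), (5), (6) — each has a visible $T$-backbone ($x_4^3$ or $x_3^3$ or $x_6^3$) plus extra variables forced to attach in a $Y$-like or branching pattern that is \emph{not} a disjoint sum of $K$/$T$/$Y$ pieces. \emph{Step 3:} Treat $c_I \ge 2$, i.e. a genuine $K$-type cycle of length $c_I+1$ (if $s_I = c_I+1$) or $s_I - 1 = c_I$ excluded, leaving $(s_I,c_I)=(3,3)$, $(4,2)$ etc.; here Proposition~\ref{proposition: loop} pins $N \ge s_I + c_I$ and the leftover variables are few, and running Lemma~\ref{lemma: key} around the cycle produces the triangle-type configurations (1), (7), (8). \emph{Step 4:} For each surviving $S_I$, verify it is genuinely non-simple (no subset of its monomials sums to a disjoint $K/T/Y$ combination spanning all needed variables) and genuinely maximal in $\calP_{3,6}$ (any strictly larger $I'$ would have a larger character group, contradicting the edge constraints already derived), and read off the group $G$ from the character relations $\lambda = \prod \lambda_i^{\alpha_i}$ imposed by the monomials in $S_I$.

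The main obstacle, as usual in this kind of combinatorial classification, is \emph{completeness} of the case analysis in Steps 2 and 3 rather than any single hard lemma: one must be sure that no admissible $(s_I,c_I)$-configuration has been overlooked and that within each configuration every way the $6 - s_I$ extra coordinates can attach has been accounted for. This is where bookkeeping with the sets $M(a_1,\dots,a_n;b_1,\dots,b_k)$ from Lemma~\ref{lemma: main criterion} becomes essential — each application forces a monomial into $S_I$, and chasing the induced character relations either closes off a branch (producing a simple polynomial, hence discarded) or lands in one of the eight listed cases. A secondary subtlety is distinguishing, at the end, the four distinct $C_2 \oplus C_4$ cases (3)–(6): they have isomorphic abstract groups but inequivalent data $[I]$, so one must check the character tuples are not related by the equivalence $\lambda_i' = \lambda_i \xi$, $\lambda' = \lambda \xi^3$ of \S\ref{subsection: poset}. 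I expect the computation in each individual branch to be short (a handful of character identities of the form $\lambda_i^2 = \lambda_j^2$ or $\lambda_i = \lambda_j$), so the length of the proof will be dominated by the number of branches, not the depth of any one of them.
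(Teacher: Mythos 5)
Your plan follows essentially the same route as the paper: a case analysis on $(s_I,c_I)$, using Lemma \ref{lemma: restriction of edges in a spoon} to control edges inside a maximal chain, Proposition \ref{proposition: loop} to bound $s_I+c_I$ by $N=6$, and Lemma \ref{lemma: key} together with the sets $M(\cdot\,;\cdot)$ from Lemma \ref{lemma: main criterion} to force mixed monomials in each branch. However, the plan as written contains concrete bookkeeping errors that would derail the execution. First, you attribute cases (7) and (8) to the $c_I\geqslant 2$ branch (``running Lemma \ref{lemma: key} around the cycle produces the triangle-type configurations (1), (7), (8)''), but in that branch the paper shows that a genuine cycle with $s_I=c_I+1$ always yields a simple $S$, and the only non-simple survivor is case (1) (from $s_I=4$, $c_I=2$). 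Cases (7) and (8) in fact arise in the $c_I=0$, $s_I=2$ branch, via counting self-loops $(i,i)\in E_{S_I}$; your Step 2 claims the $c_I=0$ survivors are ``exactly (2), (3), (4), (5), (6)'', so your analysis of $s_I=2$ would terminate prematurely and miss two of the eight cases.

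Second, the admissible pairs you list for $c_I\geqslant 2$ include $(2,2)$ and $(3,3)$, which are impossible since $c_I\leqslant s_I-1$ by construction of the chain; the actual pairs to treat are $(4,2)$ (from $s_I>c_I+1$ and Proposition \ref{proposition: loop}) and $s_I=c_I+1$ with $c_I\geqslant 2$. Third, your Step 1 asserts that $s_I=5$ forces ``a simple $K$- or $T$-type decomposition exhausting all six variables''; in fact the surviving configuration there ($i_6=4$, giving $S=\{x_1^2x_2,x_2^2x_3,x_3^2x_4,x_4^2x_5,x_5^3,x_6^2x_4,x_3x_5x_6\}$) is of type $Y$, not a disjoint sum of $K$ and $T$ pieces — the conclusion (simple) still holds only because $Y$-type polynomials are admitted into Definition \ref{definition: KTY} for $d=3$, so this is not a cosmetic point. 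Since you correctly identify completeness of the case analysis as the crux, these misplacements are exactly the kind of gap that matters here: the skeleton is right, but the branch accounting needs to be redone before the plan constitutes a proof.
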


\begin{proof}
We have defined two invariants $s=s_I, c=c_I$ for $I$, see the end of \S \ref{section: pre}. Suppose $c\ne 0$, by Lemma \ref{lemma: d=3 c>1} we have $c \geqslant 2$. If $s>c+1$, then by Proposition \ref{proposition: loop}, we have $N\geqslant s+c\geqslant 2(c+1)$. Then we must have $c=2$ and $s=4$. With a suitable choice of coordinates $x_1, \cdots, x_6$, we may ask $S\supset \{x_1^2 x_2, x_2^2 x_3, x_3^2 x_1, x_4^2 x_1, x_5^2 x_2, x_6^2 x_3\}$. By Lemma \ref{lemma: key}, we have either $\lambda_5=\lambda_1$ or $x_3 x_4 x_5\in S$. If $\lambda_5=\lambda_1$, we claim that we must have $\lambda_6=\lambda_2$. In fact, using Lemma \ref{lemma: key} again, we have either $\lambda_6=\lambda_2$ or $x_1 x_5 x_6\in S$. If the latter happens, then $\lambda_1\lambda_5\lambda_6=\lambda_1^2 \lambda_2$, which still implies that $\lambda_6=\lambda_2$.

Similarly, each of the three equalities $\lambda_5=\lambda_1, \lambda_6=\lambda_2, \lambda_4=\lambda_3$ implies the other two. If all the equalities hold, then $S$ contains $\{x_1^2 x_2, x_2^2 x_3, x_3^2 x_1, x_4^2 x_5, x_5^2 x_6, x_6^2 x_4\}$, hence simple. If not, then $S$ contains the set
\begin{equation*}
\{x_1^2x_2,x_2^2x_3,x_3^2x_1,x_4^2x_1,x_5^2x_2,x_6^2 x_3,x_3 x_4 x_5,x_1 x_5 x_6,x_2 x_4 x_6\}
\end{equation*}
An element in $G$ could be represented as $\diag[\omega:\lambda_2:\lambda_3:1:\lambda_5:\lambda_6]$. Then
\begin{equation*}
\omega^2 \lambda_2=\lambda_2^2\lambda_3=\lambda_3^2 \omega=\omega=\lambda_3 \lambda_4 \lambda_5=\lambda_1 \lambda_5 \lambda_6,
\end{equation*}
hence $\lambda_2=\omega^{-1}, \lambda_3=\omega^3, \lambda_5=\omega^4, \lambda_6=\omega^2$ and $\omega^6=1$. This gives rise to case (1).

If $c=2$ and $s=c+1$, we may ask $S\supset \{x_1^2 x_2, x_2^2 x_3, x_3^2 x_1\}$ and assume moreover that $\lambda_1, \lambda_2, \lambda_3$ are distinct. There exists $i_4, i_5, i_6\in [6]$ such that $(4, i_4), (5, i_5), (6, i_6)\in E$. If one of $i_4, i_5, i_6$ can be chosen from $[3]$, we can assume (without loss of generality) that $i_4=2$. From the maximality of $s=3$, we know that $\lambda_4$ is equal to one of $\lambda_1, \lambda_2, \lambda_3$. The only possibility is $\lambda_4=\lambda_1$. By Lemma \ref{lemma: main criterion}, there exists $j\in[6]\backslash\{1,2,4\}$ such that $(4, j) \in E$. We may assume $\lambda_5=\lambda_2$. We can furthermore assume that $\lambda_6=\lambda_3$. Thus $S$ is simple.

Suppose $i_4, i_5, i_6$ all belong to $\{4,5,6\}$. If $(4,4), (5,5), (6,6)\in E$, then $S$ is simple. We assume $(5,6)\in E$. If $i_6=4$, then whatever $i_4$ is, we can conclude that $S$ is simple. If $i_6=5$, then $\lambda_5=\lambda_6$, which implies that $(5,5), (6,6)\in E$. Whatever $i_4$ is, we have that $S$ is simple. Suppose $i_6=6$. If $i_4=4$ or $5$, then $S$ is simple. We reduce to the case $(4,6), (5,6), (6,6)\in E$. By Lemma \ref{lemma: main criterion}, there exists $m\in M(4,5;6)$. If $\deg_4 m+\deg_5 m=3$, then $S$ is simple; otherwise, we may assume $\deg_1 m=1$. Then $\lambda_1^2=\lambda_6^2$, which implies that $(6,2)\in E$, which is already excluded. A similar argument in this paragraph will be used in the proof of case $c=0, s=3$ and $i_4, i_5, i_6\in \{4,5,6\}$.

If $c\ge 3$ and $s=c+1$, a similar argument as above shows that $S$ is simple.

\vspace{3ex}

Next we assume $c=0$. If $s=6$ or $1$, then $S$ is simple. Next we assume $2\leqslant s\leqslant 5$.

\smallskip

If $s=5$, assume that $\{x_1^2 x_2,x_2^2 x_3,x_3^2x_4,x_4^2x_5,x_5^3\}\subseteq S$, then we have $x_6^2 x_{i_6}\in S$ for certain $i_6\in\{1,2,3,4,5\}$. If $i_6=1$, then $S$ is simple.

If $i_6=2$, then $x_6^2 x_2\in S$. We have $\lambda_6^2 \lambda_2=\lambda_2^2\lambda_3$ and $\lambda_2\ne\lambda_3$, which implies that $\lambda_6\ne \lambda_2$. Thus $\lambda_2\notin\{\lambda_1, \lambda_6\}$. Then by Lemma \ref{lemma: key} with $(a,b,c,f)=(1,6,2,3)$, there exists $e\in\{3,4,5\}$ such that $(e,3)\in E$. We have also $(e,\min(e+1,5))\in E$. Thus $\lambda_3=\lambda_{\min(e+1,5)}$, which contradicts the definition of $s_I$. Similarly, if $i_6=3$, we take $(a,b,c,f)=(2,6,3,4)$ in Lemma \ref{lemma: key}, then there exists $e\in\{1,4,5\}$ with $(e,4)\in E$, which implies that $\lambda_4=\lambda_{\min(e+1,5)}$, again a contradiction.

If $i_6=4$, then by Lemma \ref{lemma: key} with $(a,b,c,f)=(3,6,4,5)$, there exists $e\in\{1,2,5\}$ such that $(e,5)\in E$. The only possibility is $e=5$, then $x_3x_6x_5\in S$. Now, $S=\{x_1^2x_2,x_2^2x_3,x_3^2x_4,x_4^2x_5,x_5^3,x_6^2x_4,x_3x_6x_5\}$ is large. This is of type $Y$.

If $i_6=5$, then by Lemma \ref{lemma: key} with $(a,b,c,f)=(4,6,5,5)$, there exists $e\in \{1,2,3\}$ such that $(e,5)\in E$, which is impossible.

\smallskip

If $s=4$, then we suppose $\{x_1^2x_2,x_2^2x_3,x_3^2x_4,x_4^3\}\subseteq S$. Take $i_5, i_6$ such that $(5,i_5),(6,i_6)\in E$. Without loss of generality, we assume $i_5\leqslant i_6$. By the minimality of $s$, we must have $i_5\geqslant 2$.

Suppose $i_5=2$, then using Lemma \ref{lemma: key} with $(a,b,c,f)=(1,5,2,3)$, there exists $e\in\{3,4,6\}$ such that $(e,3)\in E$. The only possibility is $e=6$, hence we may ask $i_6=3$. Now using Lemma \ref{lemma: key} with $(a,b,c,f)=(2,6,3,4)$, there exists $e_2\in\{1,5,4\}$ such that $(e_2,4)\in E$. The only possibility is $e_2=4$. Since $\lambda_4\neq\lambda_3$, it holds $x_2x_6x_4\in S$. By definition of $e$, we have either $\lambda_6=\lambda_2$ or $x_1x_5x_6\in S$. If $\lambda_6=\lambda_2$, then $x_5^2x_6\in S$, which implies that $S$ is of type $Y$ and hence simple. If $x_1x_5x_6\in S$, then
$S=\{x_1^2x_2,x_2^2x_3,x_3^2x_4,x_4^3,x_5^2x_2,x_1x_5x_6,x_6^2x_3,x_2x_6x_4\}$
is large and $G=\{\diag[\omega:\omega^6:\omega^4:1:\omega^5:\omega^2]|\omega^8=1\}$. This is case (2).

Suppose $i_5=3$, then by Lemma \ref{lemma: key} with $(a,b,c,f)=(2,5,3,4)$, there exists $e\in\{1,4,6\}$ such that $(e,4)\in E$. The only possibilities for $e$ are $4$ and $6$.

If $e=6$, then we may ask $i_6=4$. By Lemma \ref{lemma: key} with $(a,b,c,f)=(3,6,4,4)$, there exists $e_2\in\{1,2,5\}$ such that $(e_2,4)\in E$, where we have no choice.

If $e=4$, then $x_2x_5x_4\in S$. If $i_6\in\{5,6\}$, then $S$ is simple. The possibility $i_6=4$ was already excluded before. If $i_6=3$, then by Lemma \ref{lemma: key} with $(a,b,c,f)=(2,6,3,4)$, there exists $e_2\in \{1,4,5\}$ such that $(e_2,4)\in E$. The only possibility is $e_2=4$. Since $\lambda_3\neq \lambda_4$, we must have $x_2x_6x_4\in S$. Using Lemma \ref{lemma: key} again for $(a,b,c,f)=(5,6,3,4)$, there exists $e_3\in\{1,2,4\}$ with $(e_3, 4)\in E$. The only possibility is $e_3=4$. Then we have $x_5 x_6 x_4\in S$. As  $x_2x_6x_4, x_2x_5x_4, x_5 x_6 x_4$ all belong to $S$, it holds $\lambda_2=\lambda_5=\lambda_6$. Thus $\lambda_2^2\lambda_4=\lambda_2\lambda_5\lambda_4=\lambda=\lambda_2^2\lambda_3$, which implies that $\lambda_3=\lambda_4$. But this contradicts our assumption.

Suppose $i_5=4$, then by Lemma \ref{lemma: key} with $(a,b,c,f)=(3,5,4,4)$, there exists $e\in\{1,2,6\}$ such that $(e,4)\in E$. The only possibility is $e=6$. Thus we may ask $i_6=4$.

If $\lambda_6=\lambda_4$, then $x_5^2x_6,x_6^3\in S$, which implies that $S$ is simple. Suppose $\lambda_6\ne\lambda_4$, then $x_3x_5x_6\in S$. An element in $G$ could be represnted as $\diag[\omega:\omega^6:\omega^4:1:\zeta\omega^4:\zeta]$ with $\omega^8=\zeta^2=1$. Since $\lambda_3\ne \lambda_4$, we can choose this element with $\omega^4\ne 1$. By Lemma \ref{lemma: main criterion}, there exists $m\in S$ such that $m\in M(2,5,6;3,4)$, which gives rise to a constraint in the form $\omega^a\zeta^b=1$ with nonnegative integers $a<8,b<2$. From $\zeta^2=1$ we have $\omega^{2a}=1$.

Suppose $\deg_1m=1$, then $a$ is odd. From $\omega^8=\omega^{2a}=1$ we deduce that $\omega^2=1$, which is a contradiction. Hence $\deg_1 m=0$. Then we have $\deg_2 m+\deg_5 m+\deg_6 m=3$. If $b=0$, then $\deg_5 m+\deg_6 m$ is even and $\deg_2 m$ is odd, which implies that $a\equiv 2$ (mod 4). From $\omega^{4a}=\omega^8=1$ we conclude $\omega^4=1$, which is a contradiction.

If $b=1$, then $\deg_5 m+\deg_6 m$ is odd and $\deg_2 m$ is even. Hence $a\equiv 0$ (mod 4). Then we have either $\zeta=1$ or $\zeta=\omega^4$. If $\zeta=1$, then $x_5^2x_6, x_6^3\in S$, hence $S$ is simple. If $\zeta=\omega^4$, then $x_5^3, x_6^2 x_5\in S$, hence $S$ is again simple. We have finished the discussion of case $s=4$.

\smallskip

If $s=3$, then there exist $(4,i_4),(5,i_5),(6,i_6)\in E,i_4,i_5,i_6\in [6]$. Suppose $i_4,i_5,i_6\in\{4,5,6\}$, then from the argument in the proof of case $c=2, s=3$, we only need to treat the case $(4,6), (5,6), (6,6)\in E$, and $\lambda_4\ne \lambda_6$, $\lambda_5\ne \lambda_6$. By Lemma \ref{lemma: key} with $(a,b,c,f)=(4,5,6,6)$, there exists $e\in\{1,2,3\}$ such that $(e,6)\in E$. If $e=1$, then $\lambda_2=\lambda_6$. Since $x_6^3\in S$, then $x_2^3\in S$, which implies $\lambda_2=\lambda_3$, a contradiction.


Next, whenever $e=2$ or $3$, we always have $\lambda_6=\lambda_3$. By Lemma \ref{lemma: main criterion}, there exists a monomial $m\in S$ such that $m\in M(2,4,5; 3,6)$. Suppose $\deg_1m=0$, then $m=x_2x_4x_5$ is the only choice. In this case we have a large and non-simple $S=\{x_1^2x_2,x_2^2x_3,x_2^2x_6,x_2x_4x_5,x_3^3,x_3^2x_6,$
$x_4^2x_3,x_5^2x_3,x_6^2x_3,x_4^2x_6,x_5^2x_6,x_6^3\}$, and $G=\{\diag[\omega:\omega^2:1:\zeta:\omega^2\zeta:1]|\omega^4=\zeta^2=1\}$. In particular, a smooth polynomial is $F=x_1^2x_2+x_2^2x_3+x_3^3+x_4^2x_6+2x_5^2x_6+3x_6^3+x_2x_4x_5+x_4^2x_3+x_5^2x_3$. This is case (3).

Next, we assume the existence of $u\in\{4,5,6\}$, such that $(u,4), (u,5), (u,6)\notin E$. Without loss of generality, we assume that $u=4$, and $(u,v)\in E$ for $v=1,2$ or $3$. Since $(4,4)\notin E$, we have $\lambda_4\ne \lambda_3$. From the maximality of $s$, we cannot have $v=1$. Hence we have either $(4,2)\in E$ or $(4,3)\in E$.

Suppose $(4,2)\in E$, then by Lemma \ref{lemma: key} with $(a,b,c,f)=(1,4,2,3)$, there exists $e\in\{3,5,6\}$ such that $(e,3)\in E$. If $e\neq 3$, then we assume (without loss of generality) $e=5$, so $(5,3)\in E$. By Lemma \ref{lemma: key} with $(a,b,c,f)$=(2,5,3,3), there exists $e_2\in\{1,4,6\}$ such that $(e_2,3)\in E$. The only choice is $e_2=6$, so $(6,3)\in E$.

From the definition of $e_2$, we have to choose either $\lambda_6=\lambda_3$ or $x_2x_5x_6\in S$. Suppose $\lambda_6=\lambda_3$, then $(5,6),(6,6)\in E$. Next, from the definition of $e$, we have to choose either $\lambda_5=\lambda_2$ or $x_1x_4x_5\in S$. If $\lambda_5=\lambda_2$, then $(4,5)\in E$, which makes $S$ simple. If $x_1x_4x_5\in S$, then $S=\{x_1^2x_2,x_2^2x_3,x_3^3,x_4^2x_2,x_1x_4x_5,x_5^2x_3,x_5^2x_6,x_6^2x_3,x_2^2x_6,x_3^2x_6,x_6^3\}$
is large, and $G=\{\diag[\omega:\omega^2:1:\omega\zeta:\omega^2\zeta:1]|\omega^4=\zeta^2=1\}$. This is case (4).

Suppose $x_2x_5x_6\in S$ and $\lambda_6\neq \lambda_3$. If $\lambda_2=\lambda_5$, then $x_2^2x_6\in S$, which implies that $\lambda_6=\lambda_3$ and this is a contradiction. If $x_1x_4x_5\in S$, then $S=\{x_1^2x_2,x_2^2x_3,x_3^3,x_4^2x_2,x_1x_4x_5,x_5^2x_3,$
$x_2x_5x_6,x_6^2x_3\}$
is large, and $G=\{\diag[\omega:\omega^2:1:\omega\zeta:\omega^2\zeta:\zeta]|\omega^4=\zeta^2=1\}$. This is case (5).

If $e=3$, since $\lambda_3\neq\lambda_2$, we must have $x_1x_4x_3\in S$. Suppose $i_5,i_6\in\{5,6\}$, then $S$ is simple because the monomials in $\{x_1^2 x_2, x_2^2 x_3, x_3^3, x_4^2 x_2, x_1 x_3 x_4\}$ form a polynomial of $Y$-type. We suppose $i_5=2$, then according to Lemma \ref{lemma: key} with $(a,b,c,f)=(1,5,2,3)$, there exists $e_2\in\{3,4,6\}$ such that $(e_2,3)\in E$. If $e_2=4$, then $\lambda_2=\lambda_3$, which is not the case.

If $e_2=6$, then $(6,3)\in E$. According to Lemma \ref{lemma: key} with $(a,b,c,f)=(2,6,3,3)$, we have $e_3\in\{1,4,5\}$ such that $(e_3,3)\in E$. However, whatever it is, we have $\lambda_2=\lambda_3$, which is not the case.

If $e_2=3$, then $x_1x_5x_3\in S$, which implies that $\lambda_4=\lambda_5$ because $x_1x_4x_3\in S$. According to Lemma \ref{lemma: main criterion}, there exists a monomial $m\in S$ such that $m\in M(4,5;2)$. Since $\lambda_4=\lambda_5$, we may ask $\deg_5 m=0$. The only possibility is $m=x_4^2x_6$, then $\lambda_2=\lambda_6$. In this case, we have $(6,3)\in E$, which is already excluded on the last paragraph. We are done with the case $(4,2)\in E$.

Assume $(4,3)\in E$. According to Lemma $\ref{lemma: key}$ with $(a,b,c,f)=(2,4,3,3)$, we can suppose (without loss of generality) $e=5$ and $(5,3)\in E$. We have either $\lambda_5=\lambda_3$ or $x_2 x_4 x_5\in S$. Suppose $\lambda_5=\lambda_3$, then $(4,5)\in E$, which is a contradiction because we have taken $u=4$ such that $(u, 4), (u, 5), (u, 6) \notin E$.

If $\lambda_5\ne \lambda_3$, then we have $x_2 x_4 x_5\in S$.  If $\lambda_6=\lambda_3$, then $(4,6)\in E$, which is a contradiction. Since $(3,3)\in E$ and $(4,4)\notin E$, we have $\lambda_3\ne \lambda_4$. Next we have $\lambda_3\notin\{\lambda_4, \lambda_5, \lambda_6\}$. From $(5,3)\in E$ we have $(5,4), (5,5), (5,6)\notin E$.

According to Lemma \ref{lemma: main criterion}, we have a monomial $m\in S$ such that $m\in M(1,4,5;2,3)$. Since $x_4,x_5$ are symmetric, we suppose $\deg_4 m\leqslant\deg_5 m$.

Fristly, we suppose $\deg_6 m=0$. Since $x_2x_4x_5\in S$ and $\lambda_1\neq \lambda_2$, then $m\neq x_1x_4x_5$. Since $\lambda_1,\lambda_4,\lambda_5\neq\lambda_3$, then $m\notin\{x_5^2x_1,x_5^2x_4,x_5^3\}$. Since $x_2x_4x_5,x_2^2x_3\in S$ and $\lambda_3\neq\lambda_4$, we have $\lambda_2\neq\lambda_5$, which implies $m\neq x_1^2x_5$. We excluded all possibilities, thus a contradiction.

Next we suppose $\deg_6m=1$. Since $\lambda_6\neq\lambda_3$, then $m\neq x_5^2x_6$. The two choices $m\in\{x_1^2x_6,x_4x_5x_6\}$ are equivalent because they are both equivalent to $\lambda_6=\lambda_2$. Suppose this is the case, then $x_6^2x_3\in S$. According to Lemma \ref{lemma: key} with $(a,b,c,f)=(2,6,3,3)$, we have $e\in \{1,4,5\}$ such that $(e,3)\in E$. Since $\lambda_2\neq\lambda_3$, then $e\neq 1$. Without loss of generality, we let $e=5$. We have either $\lambda_5=\lambda_3$ or $x_2 x_5 x_6\in S$. Both are impossible.

Finally, if $m=x_1x_5x_6$, then $S=\{x_1^2x_2,x_2^2x_3,x_3^3,x_4^2x_3,x_5^2x_3,x_2x_4x_5,x_1x_5x_6,x_6^2x_2\}$
is large and $G=\{\diag[\omega:\omega^2:1:\omega^2\zeta:\zeta:\omega^3\zeta]|\omega^4=\zeta^2=1\}$, which is case (6).
We completed the discussion for $s=3$.

\smallskip

If $s=2$, we consider the number of self-loops in $E$. If this number is at least $4$, we claim that $S$ is simple. Assume we have $(3,3),(4,4),(5,5),(6,6)\in E$. There exist $(1,i_1),(2,i_2)\in E$. Since $s=2$ and in order to make $S$ non-simple, without loss of generality, we let $i_1=i_2=3$. Then, according to Lemma \ref{lemma: key} with $(a,b,c,f)=(1,2,3,3)$, there exists $e\in\{4,5,6\}$ such that $(e,3)\in E$. Let $e=4$, then $\lambda_4=\lambda_3$ which implies $(2,4)\in E$ and this makes $S$ simple. Next we assume that the number of self-loops in $E$ is at most $3$.

Suppose there exist exactly $3$ self-loops, then we assume $(1,i_1)$, $(2,i_2)$, $(3,i_3)$, $(4,4)$, $(5,5)$, $(6,6)\in E$ and $\lambda_u\neq\lambda_v$ for all $u\in[3],v\in\{4,5,6\}$. If $i_1,i_2,i_3$ are distinct, then $S$ is simple. Otherwise, we assume (without loss of generality) that $i_1=i_2=4$. According to Lemma \ref{lemma: key} with $(a,b,c,f)=(1,2,4,4)$, there exists $e\in\{3,5,6\}$ such that $(e,4)\in E$. If $e=3$, then since $\lambda_3\neq\lambda_4$, hence we must have $x_1x_2x_3\in S$. Now, $S=\{x_1^2x_4,x_2^2x_4,x_3^2x_4,x_1x_2x_3,x_4^3,x_5^3,x_6^3\}$ is large and $G=\{\diag[\alpha:\beta:\alpha\beta:1:\omega:\zeta]|\alpha^2=\beta^2=\omega^3=\zeta^3=1\}$, which is case (7). Next, (without loss of generality) we let $e=5$, then $\lambda_4=\lambda_5$, which implies $(2,5)\in E$. In order not to make $S$ simple, we have $(3,6)\notin E$. In this case, wheneer $(3,4)\in E$ or $(3,5)\in E$,  it becomes the same case as $e=3$, which is already discussed.

Suppose there exist exactly $2$ self-loops, then we assume $(1,5),(2,5),(3,i_3),(4,i_4),(5,5),$
$(6,6)\in E$ and $\lambda_u\neq\lambda_v$ for all $u\in[4],v\in\{5,6\}$. According to Lemma \ref{lemma: key} with $(a,b,c,f)=(1,2,5,5)$, there exists $e\in\{3,4,6\}$ such that $(e,5)\in E$. If $e=3$, from $\lambda_3\neq\lambda_5$ we must have $x_1x_2x_3\in S$. Next we consider the cases $i_4=5,6$ separately.

If $(4,6)\in E$, then $S=\{x_1^2x_5,x_2^2x_5,x_3^2x_5,x_1x_2x_3,x_5^3,x_4^2x_6,x_6^3\}$ is large, and $G=\{[\alpha:\beta:\alpha\beta:\omega:1:\omega^4]|\alpha^2=\beta^2=\omega^6=1\}$, which is case (8).

If $(4,5)\in S$ and $\lambda_5\neq\lambda_6$, then according to Lemma \ref{lemma: key} with $(a,b,c,f)=(1,4,5,5)$, there exists $e_2\in\{2,3,6\}$ such that $(e_2,5)\in E$. Since $\lambda_5\neq \lambda_6$, then $e_2\neq 6$. If $e_2=2$, then from $\lambda_2\ne \lambda_5$ we must have $x_1x_2x_4\in S$. Combining with $x_1 x_2 x_3\in S$, we have $\lambda_3=\lambda_4$. According to Lemma \ref{lemma: main criterion}, there exists $m\in S$ such that $m\in M(2,3,4;1,5)$. Since $\lambda_3=\lambda_4$, we may assume $\deg_4 m=0$. From $\lambda_2,\lambda_3\ne\lambda_5$ we have $m\notin\{x_2^2x_3,x_3^2x_2\}$. Since $\lambda_5\neq\lambda_6$, then $m\notin\{x_2^2x_6,x_3^2x_6\}$. Finally, since $\lambda_1\neq\lambda_6$, then $m\neq x_2x_3x_6$. Now, there is no choice left for $m$, which is a contradiction.

The case $e=4$ is symmetric to $e=3$.
Next, If $e=6$, then $\lambda_6=\lambda_5$, which implies that $(a,b)\in E$ for all $a\in[4],b\in\{5,6\}$. According to Lemma \ref{lemma: main criterion}, there exists $m\in S$ such that $m\in M(1,2,3,4;5,6)$. We assume without loss of generality that $m\in\{x_1^3,x_1^2x_2,x_1x_2x_3\}$. The former two choices are excluded because $\lambda_1,\lambda_2\neq\lambda_5$, and the last choice was already discussed in the case $e=3$.

Suppose there exists exactly one self-loop, then we assume $(u,6)\in E$ for all $u\in[6]$ and $\lambda_v\neq\lambda_6$ for all $v\in[5]$. According to Lemma \ref{lemma: key} with $(a,b,c,f)=(1,2,6,6)$, there exists $e\in \{3,4,5\}$ such that $x_1x_2x_e\in S$. Without loss of generality, we let $e=3$. Using Lemma \ref{lemma: key} again with $(a,b,c,f)=(1,4,6,6)$, there exists $e_2\in\{2,3,5\}$ such that $x_1 x_4 x_{e_2}\in S$.

If $e_2=2$, then both $x_1 x_2 x_3$ and $x_1 x_2 x_4$ belong to $S$, hence $\lambda_3=\lambda_4$. Using Lemma \ref{lemma: key} with $(a,b,c,f)=(3,4,6,6)$, then there exists $e_3\in\{1,2,5\}$ with $x_3x_4x_{e_3}\in S$. From $\lambda_3=\lambda_4$ and $x_3^2 x_6\in S$ we must have $\lambda_{e_3}=\lambda_6$, which is a contradiction. The case $e_2=3$ similarly leads to contradiction.

If $e_2=5$, then $x_1x_4x_5\in S$. According to Lemma \ref{lemma: key} with $(a,b,c,f)=(2,4,6,6)$, we obtain $x_2x_4x_{e_3}\in S$ for certain $e_3\in\{1,3,5\}$. If $e_3=1$, then we are again in the situation $e_2=2$. If $e_3=3$, then from $x_1x_2x_3, x_2 x_3 x_4\in S$ we have $\lambda_1=\lambda_4$. Combining with $x_1x_4x_5, x_1^2 x_6\in S$, we have $\lambda_5=\lambda_6$, which is a contradiction. The case $e_3=5$ similarly leads to contradiction. The proof of the theorem is now completed.
\end{proof}

\section{Quartic Surfaces}
\label{section: quartic}

\begin{thm}
\label{theorem: main2}
Suppose $G$ is an abelian group with a faithful and liftable action on a smooth quartic surface $X$. Choose a coordinate system $x_1, \cdots, x_4$ for $\PP^3$ such that the action of $G$ is diagonal. Let $I=(G, \lambda, \lambda_1, \cdots, \lambda_4)$ be the associated data. Suppose $[I]$ is maximal in $\calP_{4,4}$, then either $S=S_I$ is simple, or we have the following possibilities (up to a permutation of coordinates $x_1, \cdots, x_4$):
\begin{enumerate}[(1)]
\item $G=\{\diag[1:\omega:\omega^4:\omega^3]\big{|}\omega^6=1\}\cong C_6$, \\ $S=\{x_1^3x_2,x_1^2x_3x_4,x_1x_2x_4^2,x_2^3x_3,x_3^3x_2,x_4^3x_3\}$,
\item $G=\{\diag[1:\omega:\omega^4:\omega^5]\big{|}\omega^6=1\}\cong C_6$, \\ $S=\{x_1^3x_2,x_1x_2^2x_4,x_1x_3^2x_4,x_2^3x_3,x_3^3x_2,x_4^3x_3\}$,
\item $G=\{\diag[\omega:\omega^6:1:\omega^4]\big{|}\omega^9=1\}\cong C_9$, \\
$S=\{x_1^3 x_2, x_2^3 x_3, x_3^4, x_4^3 x_2, x_1x_3x_4^2\}$,
\item $G=\{\diag[\omega:\omega^6:1:\omega^3]\big{|}\omega^9=1\}\cong C_9$, \\
$S=\{x_1^3 x_2, x_2^3 x_3, x_3^4, x_4^3 x_2, x_2^2x_4^2,x_2x_3^2x_4\}$, and
\item $G=\{\diag[\omega^4:1:\omega^8:\omega^3]\big{|}\omega^{12}=1\}\cong C_{12}$, \\ $S=\{x_1^3x_2,x_1^2x_3^2,x_1x_2^2x_3,x_2^4,x_3^3x_2,x_4^4\}$.
\end{enumerate}
\end{thm}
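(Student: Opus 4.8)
The plan is to re-run the strategy of the proof of Theorem~\ref{theorem: main}, now in degree $d=4$ and ambient dimension $N=4$, with two simplifications: there is no $Y$-type in degree $\geqslant 4$, so Lemma~\ref{lemma: key} is unavailable and one works throughout only with Lemma~\ref{lemma: main criterion} and Lemma~\ref{lemma: any d}; and $N$ is small, which sharply limits the chain combinatorics. As before, everything is organised as a case analysis on $(s,c)=(s_I,c_I)$, attached to a maximal chain $(i_1,\dots,i_s)$ in $E=E_{S_I}$. Since $d\ne 3$, Lemma~\ref{lemma: d=3 c>1} does not apply and $c=1$ is now allowed; but when $c\geqslant 1$ we still split into the pure-cycle case $c=s-1$ and the tail-plus-cycle case $c<s-1$, and in the latter Proposition~\ref{proposition: loop} gives $N\geqslant s+c$, which with $N=4$ leaves only $(s,c)=(3,1)$ (in particular $(s,c)=(4,1)$ and $(4,2)$ cannot occur). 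So the pairs to be treated are: $c=0$ with $1\leqslant s\leqslant 4$, the pure cycles $(s,c)\in\{(2,1),(3,2),(4,3)\}$, and the single pair $(s,c)=(3,1)$.

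The long-chain endpoints are immediate. If $s=4$, the maximal chain already uses all four coordinates and $S_I$ contains $x_1^3x_2+x_2^3x_3+x_3^3x_4+x_4^4$ (when $c=0$) or $x_1^3x_2+x_2^3x_3+x_3^3x_4+x_4^3x_1$ (when $c=3$), a $T$-type, resp.\ $K$-type, polynomial in all of $x_1,\dots,x_4$, so $S$ is simple. If $s=1$, an easy weight argument (a vertex either self-loops, or points to a vertex of equal character which in turn self-loops) forces $x_i^4\in S_I$ for every $i$, so $S$ is again simple. One may therefore assume $s\in\{2,3\}$, so the maximal chain uses two or three coordinates and there remain one or two ``extra'' vertices. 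Each extra vertex carries an out-edge by Lemma~\ref{lemma: main criterion}; by Lemma~\ref{lemma: restriction of edges in a spoon} and the maximality of $s$, its target is either a self-loop, or lies on the chain and forces coincidences among the $\lambda_i$, or is the other extra vertex. For every resulting configuration I would feed the appropriate sets $M(\cdots)$ into Lemma~\ref{lemma: main criterion}, repeatedly: this typically produces a self-loop or a new edge completing $S_I$ to a sum of independent $K$- and $T$-blocks (so $S$ is simple), or else pins every character down to a power of one fixed root of unity.

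In the finitely many configurations that do not collapse, I would read the character relations off the monomials now known to lie in $S_I$ (degree $4$, weight $\lambda$), solve them, and verify largeness by writing down an explicit smooth quartic in the linear system spanned by $S_I$, exactly as the smooth $F$ is produced in the proof of Theorem~\ref{theorem: main}. The outcomes are: cases (1)--(2) from the pair $(s,c)=(3,1)$; cases (3)--(4) from $c=0,\ s=3$ (the chain closed by a self-loop $x_3^4$); and case (5) from $c=0,\ s=2$. Conversely one checks that none of these five $S_I$ contains a simple sub-polynomial under any permutation of the four variables, so these are precisely the maximal non-simple classes in $\calP_{4,4}$.

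The main obstacle is, as in the cubic case, the bookkeeping in the short-chain subcases --- $(s,c)\in\{(2,1),(3,1),(3,2)\}$ and $c=0$ with $s\in\{2,3\}$ --- where one must enumerate every way the extra vertices can attach, discard the many configurations that become simple, and, most delicately, verify for each of the five survivors both that it is large and that it is genuinely non-simple. A further complication peculiar to $d=4$ is that Lemma~\ref{lemma: any d} only yields the new edge $(e,f)\in E$ from $\lambda_e^{d-1}=\lambda_c^{d-1}$ rather than from an honest equality $\lambda_e=\lambda_c$ (which is what the degree-$3$ Lemma~\ref{lemma: key} supplied); so one typically has to recover the character $\lambda_e$ by a further appeal to Lemma~\ref{lemma: main criterion}, or by raising the weight relations to a suitable power in the spirit of the proof of Lemma~\ref{lemma: any d}, before the edge becomes useful.
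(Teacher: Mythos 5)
Your proposal follows essentially the same route as the paper: the same case decomposition by $(s_I,c_I)$ using Proposition~\ref{proposition: loop} to reduce to $(s,c)=(3,1)$, the pure cycles, and $c=0$ with $s\leqslant 4$, and you correctly predict that cases (1)--(2) arise from $(3,1)$, cases (3)--(4) from $c=0,\ s=3$, and case (5) from $c=0,\ s=2$, with the groups pinned down by solving the character relations and discarding small $S_I$. The only cosmetic difference is that the paper never needs Lemma~\ref{lemma: any d} in degree $4$, working instead with Lemma~\ref{lemma: main criterion} and direct weight computations, exactly as you anticipate in your final paragraph.
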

\begin{proof}
We write $s=s_I,c=c_I$ and $E=E_S$. We first consider the case $c>0$.

By Proposition \ref{proposition: loop}, we have $s+c\leqslant 4$. It is easy to check that, if $s=c+1$, then $S$ must be simple. Suppose $s>c+1$, then the only choice is $s=3,c=1$. In this case, we may assume (without loss of generality) that $\{x_1^3 x_2, x_2^3 x_3, x_3^3 x_2, x_4^3 x_3\}\subseteq S$ and $\lambda_1,\lambda_2,\lambda_3$ are distinct. Take an element $g=\diag[1:\omega:\rho:\omega\zeta]\in G$. Then $\omega=\omega^3 \rho=\rho^3 \omega=\omega^3\zeta^3\zeta$. This implies that $\rho=\omega^{-2}$, $\zeta^3=1$ and $\omega^6=1$. Therefore,
$$G\leqslant \{\diag[1:\omega:\omega^4:\omega\zeta]\big{|}\omega^6=\zeta^3=1\}\cong C_6\oplus C_3.$$

Since $\lambda_1\ne \lambda_3$, there exists an element $\diag[1:\omega:\omega^4:\omega\zeta]\in G$ with $\omega^2\ne 1$. Similarly, since $\lambda_2\ne \lambda_3$, there exists an element $\diag[1:\omega:\omega^4:\omega\zeta]\in G$ with $\omega^3\ne 1$. Therefore, there exists $\diag[1:\omega_0:\omega_0^4:\omega_0\zeta_0]\in G$ with $\omega_0$ a primitive $6$-root. This implies that $G\cong C_6\oplus C_3$ or $C_6$.

If $G\cong C_6\oplus C_3$, then $G=\{\diag[1:\omega:\omega^4:\omega\zeta]\big{|}\omega^6=\zeta^3=1\}$. By straightforward calculation, we have $S=\{x_1^3 x_2, x_2^3 x_3, x_3^3 x_2, x_4^3 x_3\}$, which is small due to lack of a monomial in $M(1,3;2)$.

Next we assume $G\cong C_6$. Since $\omega_0$ is a primitive $6$-root, and we must have $\zeta_0\in\{1,\omega_0^2,\omega_0^4\}$. If $\zeta_0=1$, then $G=\{\diag[1:\omega:\omega^4:\omega]\big{|}\omega^6=1\}$ and  $S=\{x_1^3x_2,x_1^3x_4,x_2^3x_3,x_2^2x_3x_4,x_3^3x_2,$
$x_2x_3x_4^2,x_3^3x_4,x_4^3x_3\}$, which is small due to lack of monomial in $M(2,4;3)$.

If $\zeta_0=\omega_0^2$, then $G=\{\diag[1:\omega:\omega^4:\omega^3]\big{|}\omega^6=1\}$ and $S=\{x_1^3x_2,x_1^2x_3x_4,x_1x_2x_4^2,x_2^3x_3,x_3^3x_2,$
$x_4^3x_3\}$, which is large because the polynomial $F=2x_1^3x_2+x_1^2x_3x_4+x_1x_2x_4^2+x_2^3x_3+x_3^3x_2+x_4^3x_3$ is smooth (In this case, if all coefficients equal to $1$, then we obtain a polynomial that is not smooth). This gives rise to case (1).

If $\zeta_0=\omega_0^4$, then $G=\{\diag[1:\omega:\omega^4:\omega^5]\big{|}\omega^6=1\}$ and $S=\{x_1^3x_2,x_1x_2^2x_4,x_1x_3^2x_4,x_2^3x_3,x_3^3x_2,$
$x_4^3x_3\}$, which is large and gives rise to case (2).

\smallskip

From now on, we assume $c=0$. We may assume $\{x_1^3x_2, \cdots, x_{s-1}^3 x_s,x_s^4\}\subseteq S$ and $\lambda_1,\cdots, \lambda_s$ distinct. If $s=4$, then $S$ is simple. Next we assume $s\leqslant 3$. Suppose $s=1$, then $S$ is simple.

Suppose $s=3$. By Lemma \ref{lemma: main criterion}, we have $i\in[4]$ such that $(4,i)\in E$. To make $S$ non-simple, $i\in\{2,3\}$.

If $(4,2)\in E$, then by similar calculation we obain $G\leqslant \{\diag[\omega:\omega^6:1:\omega\zeta]\big{|}\omega^9=\zeta^3=1\}\cong C_9\oplus C_3.$
Since $\lambda_2\ne\lambda_3$, there exists an element $\diag[\omega:\omega^6:1:\omega \zeta]\in G$ with $\omega^3\ne 1$. Thus there exists an element $\diag[\omega_0:\omega_0^6:1:\omega_0 \zeta_0]\in G$ with $\omega_0$ a primitive $9$-root. Therefore, we have either $G\cong C_9\oplus C_3$ or $C_9$.

If $G\cong C_9\oplus C_3$, then $G=\{\diag[\omega:\omega^6:1:\omega\zeta]\big{|}\omega^9=\zeta^3=1\}$ and $S=\{x_1^3 x_2, x_2^3 x_3, x_3^4, x_4^3 x_2\}$, which is small due to lack of a monomial in $M(1,4;2)$.

Next we assume $G\cong C_9$. Since $\omega_0$ is a primitive $9$-root, we have $\zeta_0\in\{1,\omega_0^3,\omega_0^6\}$.

If $\zeta_0=1$, then $G=\{\diag [\omega:\omega^6:1:\omega]\big{|}\omega^9=1\}$ and  $S=\{x_1^3 x_2, x_2^3 x_3, x_3^4, x_4^3 x_2, x_1^2x_2x_4,x_1x_2x_4^2\}$, which is small due to lack of a monomial in $M(1,4;2)$.

If $\zeta_0=\omega_0^3$, then $G=\{\diag[\omega:\omega^6:1:\omega^4]\big{|}\omega^9=1\}$ and  $S=\{x_1^3 x_2, x_2^3 x_3, x_3^4, x_4^3 x_2, x_1x_3x_4^2\}$, which is large and gives rise to case (3).

If $\zeta_0=\omega_0^6$, then $G=\{\diag[\omega:\omega^6:1:\omega^7]\big{|}\omega^9=1\}$ and  $S=\{x_1^3 x_2, x_2^3 x_3, x_3^4, x_4^3 x_2, x_1^2x_3x_4\}$, which is symmetric to the case (3).

If $(4,3)\in E$, then we have $G\leqslant \{\diag [\omega:\omega^6:1:\omega^6\zeta]\big{|}\omega^9=\zeta^3=1\}\cong C_9\oplus C_3$, and there exists $\diag [\omega_0:\omega_0^6:1:\omega_0^6\zeta_0]\in G$ with $\omega_0$ a primitive $9$-root. We have either $G\cong C_9\oplus C_3$ or $C_9$.

If $G\cong C_9\oplus C_3$, then $S=\{x_1^3 x_2, x_2^3 x_3, x_3^4, x_4^3 x_3\}$, which is small due to lack of a monomial in $M(2,4;3)$.

Next we assume $G\cong C_9$. We have $\zeta_0\in\{1,\omega_0^3,\omega_0^6\}$.

If $\zeta_0=1$, then $G=\{\diag[\omega:\omega^6:1:\omega^6]\big{|}\omega^9=1\}$ and  $S=\{x_1^3 x_2, x_2^3 x_3, x_3^4, x_4^3 x_3, x_1^3x_4, x_2^2x_3x_4,$
$x_2x_3x_4^2\}$, which is small due to lack of a monomial in $M(2,4;3)$.

If $\zeta_0=\omega_0^3$, then $x_4^4\in S$ and $S$ is simple.

If $\zeta_0=\omega_0^6$, then $G=\{\diag[\omega:\omega^6:1:\omega^3]\big{|}\omega^9=1\}$ and  $S=\{x_1^3 x_2, x_2^3 x_3, x_3^4, x_4^3 x_2, x_2^2x_4^2,x_2x_3^2x_4\}$, which is large and gives rise to the case (4).

\smallskip

Finally we discuss the case $s=2$ and $c=0$. We have now $\{x_1^3 x_2, x_2^4\}\subseteq S$ and $\lambda_1\ne \lambda_2$. There exists $i_3, i_4\in[4]$ such that $(3,i_3), (4, i_4)\in E$. By the maximality of $s$, we have $(3,1), (4,1)\notin E$.

Suppose $(3,2), (4,2)\notin E$. Then $i_3, i_4\in \{3,4\}$, which implies that $S$ is simple.

Suppose $(3,2), (4,4)\in E$. If $\lambda_4=\lambda_2$, then $x_3^3 x_4, x_4^4\in S$, which implies that $S$ is simple. If $\lambda_3=\lambda_2$, then $x_3^4, x_4^4\in S$, which also implies that $S$ is simple. Next we assume $\lambda_3,\lambda_4\neq \lambda_2$.

By straightforward calculation, we obtain $G\leqslant \{\diag[\alpha:1:\beta:\gamma]\big{|}\alpha^3=\beta^3=\gamma^4=1\}$. According to Lemma \ref{lemma: main criterion}, there exists a monomial $m\in S$ such that $m\in M(1,3;2)$.

We first suppose $\deg_1m+\deg_3m=4$. From $\lambda_1,\lambda_3\neq \lambda_2$ we have $m=x_1^2x_3^2$. Therefore, for an element $\diag[\alpha:1:\beta:\gamma]\in G$, we have $\alpha^2\beta^2=1$. Since $\alpha^3=\beta^3=1$, we obtain that $\beta=\alpha^2$. Then we can rewrite G as $\{\diag[\omega^4:1:\omega^8:\omega^3]\big{|}\omega^{12}=1\}\cong C_{12}$, and $S=\{x_1^3x_2,x_1^2x_3^2,x_1x_2^2x_3,x_2^4,x_3^3x_2,x_4^4\}$, which is large and gives rise to case (5).

Next we suppose $\deg_1m+\deg_3m=3$, then $\deg_4 m=1$. Without loss of generality, we assume $m=x_1^2 x_3 x_4$. Then, for an element $\diag[\alpha:1:\beta:\gamma]\in G$, we have $\alpha^2\beta\gamma=1$. Taking fourth power on both side, we conclude that $\alpha^2\beta=1$, which implies that $\alpha=\beta$ and $\gamma=1$. Thus $\lambda_4=\lambda_2$, which is a contradiction.

The case $(3,3), (4,2)\in E$ is symmetric to the case $(3,2), (4,4)\in E$ as discussed above.

Suppose $(4,3), (3,2)\in E$ and $(4,4)\notin E$. We must have $\lambda_3\ne \lambda_4$. Since $(2,2)\in E$, we must have  $\lambda_4\neq \lambda_2$. If $\lambda_3\ne \lambda_2$, then the fact $x_4^3 x_3, x_3^3 x_2, x_2^4\in S$ contradicts to the maximality of $s$. If $\lambda_3=\lambda_2$, then $x_4^3 x_3, x_3^4\in S$, which implies that $S$ is simple.

The case $(3,4), (4,2)\in E$ and $(3,3)\notin E$ is symmetric to the above case.

We finally need to discuss the situation $(3,2), (4,2)\in E$ and $(4,3), (3,4), (3,3), (4,4)\notin E$. Then $\lambda_1,\lambda_3,\lambda_4\neq \lambda_2$, and there does not exist any edge $(a,b)\in E$ such that $a,b\in[4]\backslash \{2\}$. We have $G\leqslant \{\diag[\alpha:1:\beta:\gamma]\big{|}\alpha^3=\beta^3=\gamma^3=1\}\cong C_3^3$.  According to Lemma \ref{lemma: main criterion}, there exists a monomial $m_1\in S$ such that $m_1\in M(1,3;2)$. Without loss of generality, we assume that $\deg_1 m\geqslant \deg_3 m$, then $m_1\in\{x_1^2x_3^2,x_1^2x_3x_4\}$. In any case, there exists another $m_2\in S$ such that $m_2\in M(3,4;2)$. The choices are $m_2\in\{x_3^2x_4^2,x_1x_3^2x_4,x_1x_3x_4^2\}$. For an element $\diag[\alpha:1:\beta:\gamma]\in G$, the two monomials $m_1, m_2$ impose two equations on $\alpha, \beta, \gamma$, which leads to $G\cong C_3$.


Without loss of generality, the two choices are $G=\{\diag[\omega:1:\omega:\omega]\big{|}\omega^{3}=1\}$ and $G=\{\diag[\omega:1:\omega:\omega^2]\big{|}\omega^{3}=1\}$. But both cannot happen due to lack of a monomial in $M(1,3;2)$.
\end{proof}

\bibliography{reference}

\end{document}